\newtheorem{thmA}{Theorem}
\newtheorem{thm}{Theorem}
\newtheorem{lemA}{Lemma}
\newtheorem{lem}{Lemma}
\newtheorem{rem}{Remark}
\newcommand*{\half}{\frac12}
\newcommand*{\quart}{\frac14}
\newcommand*{\e}{\mbox{e}}
\newcommand*{\lb}{\left\{}
\newcommand*{\rb}{\right\}}
\newcommand*{\RR}{\mathbb{R}}
\newcommand*{\ZZ}{\mathbb{Z}}
\newcommand*{\PP}{\mathbf{P}}
\newcommand*{\EE}{\mathbf{E}}
\newcommand*{\Var}{\mathbf{Var}}
\newcommand*{\II}{\mathbf{I}}
\newcommand*{\di}{\, \mathrm{d} }
\newcommand*{\qv}[2][M]{\langle #1,#1 \rangle_{#2}}
\begin{document}


\title{Strong approximation of continuous local martingales by simple
random walks}

\author{Bal\'azs Sz\'ekely\footnote{Research
supported by the HSN laboratory of BUTE.} and
Tam\'as Szabados\footnote{Corresponding author, address:
Department of Mathematics, Budapest University of Technology and
Economics, M\H{u}egyetem rkp. 3, H \'ep. V em. Budapest, 1521,
Hungary, e-mail: szabados@math.bme.hu, telephone: (+36 1) 463-1111/ext. 5907,
fax: (+36 1) 463-1677} \footnote{Research
supported by the French--Hungarian intergovernmental grant
``Balaton'' F-39/2000.} \\
Budapest University of Technology and Economics}

\date{}

\maketitle


\begin{abstract}
The aim of this paper is to represent any continuous local martingale
as an almost sure limit of a nested sequence of simple, symmetric random
walks, time changed by a discrete quadratic variation process. One
basis of this is a similar construction of Brownian
motion. The other major tool is a representation of continuous local
martingales given by Dambis, Dubins and Schwarz (DDS) in terms of
Brownian motion time-changed by the quadratic variation. Rates of
convergence (which are conjectured to be nearly optimal in the given
setting) are also supplied. A necessary and sufficient condition for
the independence of the random walks and the discrete time changes
or, equivalently, for the independence of the DDS Brownian motion and
the quadratic variation is proved to be the symmetry of increments of
the martingale given the past, which is a reformulation of an earlier
result by Ocone \cite{Oco93}.
\end{abstract}


\renewcommand{\thefootnote}{\alph{footnote}}
\footnotetext{ 2000 \emph{MSC.} Primary 60F15, 60G44. Secondary 60H05.}
\footnotetext{\emph{Keywords and phrases.} Local martingale, strong
approximation, DDS representation, random walk.}

\section{Introduction}

The present authors are convinced that both for theoretical and
practical reasons, it is useful to search for strong (i.e. pathwise,
almost sure) approximations of stochastic processes by simple random
walks (RWs). The prototype of such efforts was the construction of
Brownian motion (BM) as an almost sure limit of simple RW paths,
given by Frank Knight in 1962 \cite{Kni62}. Later this
construction was simplified and somewhat improved by P\'al R\'ev\'esz
\cite{Rev90} and then by one of the authors \cite{Sza96}. It is
interesting that this method is asymptotically equivalent to Skorohod
embedding of a nested sequence of RWs into BM, \cite[Theorem
4]{Sza96}. This elementary approach also led to a strong
approximation of It\^o integrals of smooth functions of BM
\cite[Theorem 6]{Sza96}. We mention that this RW construction was
extended to fractional BMs as well \cite{Sza01}.

This paper discusses a generalization to continuous local martingales
$M$. Beside the above-mentioned RW construction of BM, the other
major tool applied by the paper is a representation of continuous
local martingales by Brownian motion, time-changed by the quadratic
variation, given by Dambis \cite{Dam65} and Dubins -- Schwarz
\cite{DuS65} (DDS). Theorem \ref{th:qvar} shows that the quadratic
variation process $\qv{}$ can be almost surely uniformly approximated by a
discrete quadratic variation processes $N_m$ which are based on
stopping times of a Skorohod-type embedding of nested simple RWs into
$M$. This corresponds to an earlier similar result by Karandikar
\cite{Kar83}. Theorems \ref{th:approx} and \ref{th:approxNm} give an
approximation of $M$ by a nested sequence of RWs $B_m$, time-changed
by $\qv{}$ and $N_m$, respectively. The approximations almost surely
uniformly converge on bounded intervals. Rates of convergence (which
conjectured to be nearly optimal in the given setting) are also
supplied.

It is important to note that the DDS Brownian motion $W$ and the
quadratic variation $\qv{}$ are not independent in general, just
like the approximating RW $B_m$ and the discrete quadratic
variation $N_m$. Since this could be a hindrance both in the
theory and applications, a necessary and sufficient condition is
given for the independence in Theorem \ref{th:indep}. Namely, the
approximating RW $B_m$ and the discrete quadratic variation $N_m$
(and so $W$ and $\qv{}$) are independent if and only if $M$ has
symmetric increments given the past. This is a reformulation of an
earlier result by Ocone \cite{Oco93}, see \cite{DEY93} and
\cite{VoY00} as well.

Possible applications of the discrete approximations discussed in
this paper include (a) generating continuous local martingales for
which one has a suitable method to generate the discrete quadratic
variation process $N_m$, especially when the independence
mentioned in the previous paragraph holds and (b) giving an
alternative definition of stochastic integration with almost
surely converging sums. (In a forthcoming paper the present
authors are going to take up the second topic, extending some
results of \cite{Sza96}.)

\section{Random walks and the Wiener process}

A main tool of the present paper is an elementary construction of the
Wiener process (= BM). The specific construction we are going to use
in the sequel, taken from \cite{Sza96}, is based on a nested sequence
of simple random walks that uniformly converges to the Wiener process
on bounded intervals with probability 1. This will be called \emph{RW
construction} in the sequel. One of our intentions in this paper is
to extend the underlying \emph{``twist and shrink''} algorithm to
continuous local martingales.

We summarize the major steps of the RW construction here, see
\cite{Sza01} as well. We start with an infinite matrix of i.i.d. random
variables $X_m(k)$, $\PP \lb X_m(k)= \pm 1 \rb = 1/2$ ($m\ge 0$,
$k\ge 1$), defined on the same underlying probability space
$(\Omega,\mathcal{F},\PP)$.  Each row of this matrix is a basis
of an approximation of the Wiener process with a dyadic step size
$\Delta t=2^{-2m}$ in time and a corresponding step size $\Delta
x=2^{-m}$ in space, illustrated by the next table.

\begin{table}[h]
\caption{The starting setting for the RW construction of BM}
\[
\begin{array}{|c|c|l|l|}
\hline
\Delta t &\Delta x & \mbox{i.i.d. sequence} & \mbox{RW} \\
\hline
1&1& X_0(1), X_0(2), X_0(3), \dots & S_0(n) = \sum_{k=1}^n X_0(k) \\
2^{-2}&2^{-1}& X_1(1), X_1(2), X_1(3), \dots & S_1(n) = \sum_{k=1}^n
X_1(k) \\
2^{-4}&2^{-2}& X_2(1), X_2(2), X_2(3), \dots & S_2(n) = \sum_{k=1}^n
X_2(k) \\
\vdots & \vdots & \vdots & \vdots \\
\hline
\end{array}
\]
\end{table}

The second step of the construction is \emph{twisting}. From the
independent random walks we want to create dependent ones so that
after shrinking temporal and spatial step sizes, each consecutive RW
becomes a refinement of the previous one.  Since the spatial unit will
be halved at each consecutive row, we define stopping times by
$T_m(0)=0$, and for $k\ge 0$,
\[
T_m(k+1)=\min \{n: n>T_m(k), |S_m(n)-S_m(T_m(k))|=2\} \qquad (m\ge 1)
\]
These are the random time instants when a RW visits even integers,
different from the previous one. After shrinking the spatial unit by
half, a suitable modification of this RW will visit the same integers in
the same order as the previous RW. We operate here on each point
$\omega\in\Omega$ of the sample space separately, i.e. we fix a sample
path of each RW. We define twisted RW{}s $\tilde{S}_m$ recursively for
$k=1,2,\dots$ using $\tilde{S}_{m-1}$, starting with
$\tilde{S}_0(n)=S_0(n)$ $(n\ge 0)$. With each fixed $m$ we proceed for
$k=0,1,2,\dots$ successively, and for every $n$ in the corresponding
bridge, $T_m(k)<n\le T_m(k+1)$. Any bridge is flipped if its sign
differs from the desired:
\[
\tilde{X}_m(n)=\left\{
\begin{array}{rl}
 X_m(n)& \mbox{ if } S_m(T_m(k+1)) - S_m(T_m(k))
= 2\tilde X_{m-1}(k+1), \\
- X_m(n)& \mbox{ otherwise,}
\end{array}
\right.
\]
and then $\tilde{S}_m(n)=\tilde{S}_m(n-1)+\tilde{X}_m(n)$. Then
$\tilde{S}_m(n)$ $(n\ge 0)$ is still a simple symmetric random walk
\cite[Lemma 1]{Sza96}. The twisted RWs have the desired refinement
property:
\[
\half \tilde{S}_m(T_m(k))=\tilde{S}_{m-1}(k) \qquad (m\ge 1, k\ge 0).
\]

The last step of the RW construction is \emph{shrinking}. The sample
paths of $\tilde{S}_m(n)$ $(n\ge 0)$ can be extended to continuous
functions by linear interpolation, this way one gets $\tilde{S}_m(t)$
$(t\ge 0)$ for real $t$. Then we define the $mth$ \emph{approximating
RW} by
\[
\tilde{B}_m(t)=2^{-m}\tilde{S}_m(t2^{2m}).
\]
Using the definition of $T_m$ and $\tilde{B}_m$ we also get the general
\emph{refinement property}
\begin{equation}
\tilde{B}_{m+1}\left(T_{m+1}(k)2^{-2(m+1)}\right) = \tilde{B}_m
\left( k2^{-2m}\right) \qquad (m\ge 0,k\ge 0).
\label{eq:refin}
\end{equation}
Note that a refinement takes the same dyadic values in the same order
as the previous shrunken walk, but there is a \emph{time lag} in general:
\begin{equation} T_{m+1}(k)2^{-2(m+1)} - k2^{-2m} \ne 0 .
\label{eq:tlag}
\end{equation}

Then we quote some important facts from \cite{Sza96} about the above
RW construction that will be used in the sequel. These will be stated
in somewhat stronger forms but can be read easily from the proofs in
the cited reference, cf. Lemmas 2-4 and Theorem 3 there.

\begin{lemA} \label{le:ldi}
Suppose that $X_1,X_2,\dots ,X_N$ is an i.i.d. sequence of random
variables, $\EE(X_k)=0$, $\Var(X_k)=1$, and their
moment generating function is finite in a neighborhood of 0. Let
$S_j = X_1 + \dots + X_j$, $1\le j\le N$. Then for any $C>1$ and $N \ge
N_0(C)$ one has
\[
\PP \lb \sup_{1 \le j \le N} |S_j| \ge (2CN \log N)^{\half} \rb \le 2N^{1-C} .
\]
\end{lemA}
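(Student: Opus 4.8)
The statement is a maximal large-deviation bound for an i.i.d. sum with a finite moment generating function near the origin, so the plan is to combine the exponential Markov inequality with Doob's submartingale inequality and then optimize the exponential parameter.

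First I would reduce the two-sided supremum to a one-sided one at the cost of a factor $2$: $\PP\{\sup_{1\le j\le N}|S_j|\ge u\}\le \PP\{\sup_{1\le j\le N}S_j\ge u\} + \PP\{\sup_{1\le j\le N}(-S_j)\ge u\}$, and by symmetry of the hypotheses (they are preserved under $X_k\mapsto -X_k$) the two terms are handled identically, so it suffices to bound $\PP\{\sup_{1\le j\le N}S_j\ge u\}$ by $N^{1-C}$ with $u=(2CN\log N)^{1/2}$. Next, for $\lambda>0$ in the region where $\varphi(\lambda):=\EE(\e^{\lambda X_1})<\infty$, the process $\e^{\lambda S_j}$ is a nonnegative submartingale, so Doob's inequality gives $\PP\{\sup_{1\le j\le N}S_j\ge u\}=\PP\{\sup_{1\le j\le N}\e^{\lambda S_j}\ge \e^{\lambda u}\}\le \e^{-\lambda u}\,\EE(\e^{\lambda S_N}) = \e^{-\lambda u}\varphi(\lambda)^N$.

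The third step is the Taylor estimate on $\varphi$: since $\EE(X_1)=0$ and $\Var(X_1)=1$ and the m.g.f.\ is finite near $0$, there is a constant $a>0$ and a threshold $\lambda_0>0$ such that $\varphi(\lambda)\le \e^{\lambda^2/2 + a\lambda^3}\le \e^{\lambda^2}$ for all $0<\lambda\le\lambda_0$ (shrinking $\lambda_0$ so that $a\lambda\le 1/2$). Hence $\PP\{\sup_{1\le j\le N}S_j\ge u\}\le \exp(-\lambda u + N\lambda^2)$ for $0<\lambda\le\lambda_0$. Now choose $\lambda = u/(2N) = \sqrt{C\log N/(2N)}$; this is $\le\lambda_0$ once $N\ge N_0(C)$ because it tends to $0$. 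With this choice $-\lambda u + N\lambda^2 = -u^2/(2N) + u^2/(4N) = -u^2/(4N) = -\tfrac14\cdot 2C\log N\cdot\tfrac12\cdot\frac{1}{1}$; more precisely $u^2/(4N) = 2CN\log N/(4N) = (C/2)\log N$. This would only give $N^{-C/2}$, so I would instead optimize exactly: minimizing $-\lambda u + N\lambda^2$ over $\lambda$ gives $\lambda^\ast=u/(2N)$ and value $-u^2/(4N)=-(C/2)\log N$, which is not quite enough. The fix is to use the sharper bound $\varphi(\lambda)\le \e^{\lambda^2/2(1+o(1))}$: replace $\e^{\lambda^2}$ by $\e^{\lambda^2(1+\varepsilon)/2}$ valid for $\lambda\le\lambda_0(\varepsilon)$, optimize to get exponent $-u^2/(2N(1+\varepsilon)) = -C\log N/(1+\varepsilon)$, i.e.\ bound $N^{-C/(1+\varepsilon)}$. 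Since $C>1$ is fixed we may pick $\varepsilon$ small enough that $C/(1+\varepsilon)>1$, say $C/(1+\varepsilon)\ge C':=\tfrac{C+1}{2}>1$ — but the statement wants exponent exactly $C-1$ with the explicit constant $2$ in front, so in fact one wants $\PP\le N^{1-C}$ per side; adding the two sides gives the factor $2N^{1-C}$. Tracking constants, choosing $\lambda$ to make the one-sided exponent equal $-(C)\log N$ requires $u^2/(2N(1+\varepsilon)) \ge C\log N$, i.e.\ $(1+\varepsilon)\le 1$, which forces us to absorb the slack into $N_0(C)$: for $N\ge N_0(C)$ the cubic error term is negligible enough that $\varphi(\lambda^\ast)^N\e^{-\lambda^\ast u}\le N^{-C}$ holds with room to spare.

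The main obstacle is exactly this constant-chasing at the optimization step: the naive "$\varphi(\lambda)\le\e^{\lambda^2}$'' bound loses a factor of $2$ in the exponent and yields only $N^{-C/2}$, so the argument genuinely needs the second-order expansion $\log\varphi(\lambda)=\tfrac12\lambda^2+O(\lambda^3)$ together with the fact that the optimal $\lambda^\ast=\sqrt{C\log N/(2N)}\to 0$, which makes the $O(\lambda^3)$ correction of order $N^{-1/2}(\log N)^{3/2}\to 0$ and hence absorbable into the threshold $N_0(C)$; everything else (Doob's inequality, the union over the two signs giving the factor $2$, interpolating $\sup$ over $j$) is routine.
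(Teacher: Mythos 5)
Your proof is essentially correct, and it is worth noting that the paper itself offers no proof of Lemma A: it is quoted from \cite{Sza96} with only the remark that it ``essentially depends on a large deviation theorem.'' Your route—reduce $\sup_j|S_j|$ to two one-sided suprema, apply Doob's maximal inequality to the nonnegative submartingale $\e^{\lambda S_j}$ (legitimate since the m.g.f.\ is finite at the small $\lambda$ you use), and then exploit the second-order expansion $\log\varphi(\lambda)=\tfrac12\lambda^2+O(\lambda^3)$ with $\lambda^\ast\asymp\sqrt{\log N/N}\to 0$—is a self-contained Chern\-off-type argument that replaces the cited large deviation theorem and simultaneously handles the maximum, so nothing like L\'evy/Ottaviani or a separate maximal inequality is needed; what the citation buys the paper is brevity, what your argument buys is that the moderate-deviation regime $x\asymp\sqrt{\log N}$ is covered transparently and the threshold $N_0(C)$ is visibly just ``the cubic error is $o(1)$.'' Two small points to tighten, neither a real gap: (i) your final claim $\varphi(\lambda^\ast)^N\e^{-\lambda^\ast u}\le N^{-C}$ is a touch too strong—the cubic term gives $N^{-C}\e^{o(1)}$, which can exceed $N^{-C}$; since the target per side is only $N^{1-C}$, state instead that the exponent is $-C\log N+O\bigl((\log N)^{3/2}N^{-1/2}\bigr)\le -(C-1)\log N$ for $N\ge N_0(C)$; (ii) in the $\varepsilon$-formulation the clean condition is $C/(1+\varepsilon)\ge C-1$, i.e.\ $\varepsilon\le 1/(C-1)$ (your intermediate target $(C+1)/2$ only suffices for $C\le 3$), and with that choice the bound $2N^{1-C}$ follows exactly as you say.
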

We mention that this basic fact, that appears in the above-mentioned
reference \cite{Sza96}, essentially depends on a large deviation theorem.

We have a more convenient result in a special case of Hoeffding's inequality,
cf. \cite{Hoef63}. Let $X_1,
X_2, \dots $ be a sequence of bounded i.i.d. random variables, such that
$b_i \le X_i \le a_i$, and let $S_n = \sum_{i=1}^n X_i$. Then by
Hoeffding's inequality, for any $x > 0$ we have
\[
\PP \lb |S_n - \EE (S_n)| \ge x \left(\frac14 \sum_{i=1}^n
(a_i - b_i)^2\right)^{\frac12} \rb \le 2 \: \e^{-\frac{x^2}{2}} .
\]
If $\EE (X_i) = 0$ and $b_i = -a_i$ here, then $\frac14 \sum_{i=1}^n
(a_i - b_i)^2 = \sum_{i=1}^n a_i^2 = \Var (S_n)$ if and only if
$X_i = a_i X'_i$, where $\PP \lb
X'_i = \pm 1 \rb = \half$, $1 \le i \le n$.

Thus if $S = \sum_r a_r X'_r$, where not all $a_r$ are zero and $\Var (S)
= \sum_r a_r^2 < \infty$, we get
\begin{equation} \label{eq:ldi2}
\PP \lb |S| \ge x \left( \Var (S)\right) ^{\half} \rb \le 2
\: \e^{-\frac{x^2}{2}} \qquad (x \ge 0) .
\end{equation}
The summation above may extend either
to finitely many or to countably many terms. Let
$S_1, S_2, \dots , S_N$ be arbitrary sums of the
above type: $S_k = \sum_r a_{k r} X'_{k r}$, $\PP \lb
X'_{k r} = \pm 1 \rb = \half$, $1 \le k \le N$, where $X'_{k r}$ and
$X'_{l s}$ can be dependent when $k \ne l$. Then by the inequality
(\ref{eq:ldi2}) we obtain the following analog of Lemma~\ref{le:ldi}:
for any $C >1$ and $N \ge 1$,
\begin{eqnarray}
\lefteqn{\PP \lb \sup_{1\le k \le N} |S_k|
\ge (2C\log N)^{\half} \sup_{1\le k \le N} \left( \Var (S_k)\right)
^\half \rb} \nonumber \\
&\le& \sum_{k=1}^N \PP \lb |S_k| \ge (2C\log N \: \Var (S_k))
^\half \rb \le 2 N \e^{-C \log N} = 2 N^{1-C}. \label{eq:ldp2}
\end{eqnarray}

Lemma \ref{le:ldi} easily implies that
the time lags (\ref{eq:tlag}) are uniformly small if $m$ is large
enough.
\begin{lemA} \label{le:tlag}
 For any $K>0$, $C>1$, and for any $m\ge m_0(C)$, we have
\begin{eqnarray*}
\PP \lb \sup_{0\le k2^{-2m}\le K} |T_{m+1}(k)2^{-2(m+1)}
-k2^{-2m}| \ge \left(\frac32 C K \log{}_*K\right)^{\half}
m^{\half} 2^{-m} \rb \\
\le 2(K2^{2m})^{1-C} ,
\end{eqnarray*}
where $\log{}_*x=\max \{1,\log x\}$.
\end{lemA}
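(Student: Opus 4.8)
The plan is to recognize $T_{m+1}(k)2^{-2(m+1)} - k2^{-2m}$ as a rescaled mean-zero random walk in $k$ and then invoke Lemma~\ref{le:ldi}. First I would note that, by the strong Markov property of the simple symmetric random walk $S_{m+1}$, the bridge lengths $\tau_j := T_{m+1}(j) - T_{m+1}(j-1)$ ($j\ge 1$) are i.i.d., each equal in law to the first time $\tau$ that a simple symmetric random walk started at $0$ reaches $\pm 2$; hence $T_{m+1}(k) = \sum_{j=1}^{k}\tau_j$. A one-step conditioning on the first increment gives $\EE\tau = 4$ and $\EE\tau^2 = 24$, so $\EE(\tau-4)=0$, $\Var\tau = 8$, and $\tau$ has geometrically decaying tails, hence a moment generating function finite near $0$. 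Therefore $X_j := (\tau_j-4)/\sqrt 8$ form an i.i.d.\ sequence satisfying the hypotheses of Lemma~\ref{le:ldi}.

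Writing $S_k := X_1 + \dots + X_k$ and using $2^{-2(m+1)} = \quart\,2^{-2m}$, one has
\[
T_{m+1}(k)2^{-2(m+1)} - k2^{-2m} = \quart\,2^{-2m}\bigl(T_{m+1}(k) - 4k\bigr) = \frac{\sqrt 2}{2}\,2^{-2m}S_k ,
\]
so the supremum in the statement (the $k=0$ term being $0$) equals $\frac{\sqrt2}{2}2^{-2m}\sup_{1\le k\le N_m}|S_k|$, where $N_m$ is the largest integer $k$ with $k2^{-2m}\le K$, i.e. $N_m=\lfloor K2^{2m}\rfloor$. For $m$ large enough $N_m$ exceeds the threshold $N_0(C)$ of Lemma~\ref{le:ldi} (for $K\ge 1$ this holds for all $m\ge m_0(C)$; when $0<K<1$ the dependence of $m_0$ on $K$ enters only through $N_m\ge N_0(C)$), and the lemma gives
\[
\PP \lb \sup_{1\le k\le N_m}|S_k| \ge (2CN_m\log N_m)^{\half} \rb \le 2N_m^{1-C} \le 2(K2^{2m})^{1-C} .
\]
On the complementary event, using $N_m\le K2^{2m}$, $\log N_m \le \log K + 2m\log 2$, and $\frac{\sqrt2}{2}\sqrt2=1$,
\[
\sup_{0\le k2^{-2m}\le K}\bigl|T_{m+1}(k)2^{-2(m+1)}-k2^{-2m}\bigr| < \frac{\sqrt2}{2}\,2^{-2m}(2CN_m\log N_m)^{\half} \le 2^{-m}\bigl(CK(\log K + 2m\log 2)\bigr)^{\half}.
\]

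It then remains to absorb this bound into the stated threshold $\left(\frac32 CK\log{}_*K\right)^{\half}m^{\half}2^{-m}$, i.e. to check $\log K + 2m\log 2 \le \frac32 m\log{}_*K$ for $m\ge m_0(C)$. Here the arithmetic is delicate but elementary: since $2\log 2 < \frac32$ and $\log{}_*K\ge 1$, one has $\frac32\log{}_*K - 2\log 2 \ge \frac32 - 2\log 2 > 0$, and a short case check over $K<1$, $1\le K<\e$, $K\ge\e$ shows $\log K \le \bigl(\frac32\log{}_*K - 2\log 2\bigr)m$ once $m$ exceeds an absolute constant (about $9$). Combining this with the probability bound above completes the proof. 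The only non-routine points are the evaluation $\Var\tau = 8$, which produces the clean constant, and the numerical inequality $2\log 2 < \frac32$, which is precisely what lets the contribution $\log N_m\sim 2m\log 2$ of the $\sim K2^{2m}$ summands be swallowed by the factor $\frac32 m\log{}_*K$; everything else is bookkeeping.
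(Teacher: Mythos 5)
Your route is exactly the one the paper has in mind: the paper gives no proof of this lemma (it is quoted from \cite{Sza96}, with only the remark that Lemma~\ref{le:ldi} ``easily implies'' it), and your reduction is the intended argument. The bridge durations $T_{m+1}(j)-T_{m+1}(j-1)$ are indeed i.i.d.\ copies of the hitting time of $\pm2$ by a simple symmetric walk; this time is twice a geometric variable of parameter $\half$, so it has mean $4$, second moment $24$, variance $8$ and geometric tails, Lemma~\ref{le:ldi} applies to the normalized partial sums, and your identity $T_{m+1}(k)2^{-2(m+1)}-k2^{-2m}=\frac{\sqrt2}{2}2^{-2m}S_k$ as well as the case check showing $\log K+2m\log 2\le\frac32 m\log_*K$ for $m$ beyond an absolute constant (the whole point being $2\log 2<\frac32$) are correct.

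Two blemishes, both repairable without new ideas. First, the inequality $2N_m^{1-C}\le 2(K2^{2m})^{1-C}$ is backwards: $N_m=\lfloor K2^{2m}\rfloor\le K2^{2m}$ and $1-C<0$, so in fact $N_m^{1-C}\ge(K2^{2m})^{1-C}$. The fix is to apply Lemma~\ref{le:ldi} with $N=\lceil K2^{2m}\rceil$ instead: the supremum over $1\le k\le N_m$ is dominated by the supremum over $1\le k\le N$, the probability bound becomes $2N^{1-C}\le 2(K2^{2m})^{1-C}$ in the right direction, and the slightly larger threshold $(2CN\log N)^{\half}$ is still absorbed by the same numerical slack $2\log 2<\frac32$, at the cost of a somewhat larger absolute constant in $m_0$. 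Second, the lemma asserts a threshold $m_0$ depending on $C$ only, while your argument needs $N_m\ge N_0(C)$, i.e.\ an $m_0$ depending on $K$ when $K$ is small, which you flag but do not resolve. For $K2^{2m}\le 1$ the claim is vacuous (the right-hand side exceeds $1$), and the remaining window $1<K2^{2m}<N_0(C)$ needs only a one-line separate estimate: there the event reads $\sup_k|S_k|\ge(3C\,K2^{2m}\,m)^{\half}\ge(3Cm)^{\half}$ with at most $N_0(C)$ summands, so the exponential tail of the hitting time makes this probability smaller than $2N_0(C)^{1-C}\le 2(K2^{2m})^{1-C}$ once $m\ge m_0(C)$. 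With these two touch-ups your proof is complete and matches the intended derivation from Lemma~\ref{le:ldi}.
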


This lemma and the refinement property (\ref{eq:refin}) implies
the uniform closeness of two consecutive approximations if $m$ is
large enough.

\begin{lemA} \label{le:refin}
For any $K>0$, $C>1$, and for any $m\ge m_1(C)$, we have
\begin{eqnarray*}
\PP \lb \sup_{0\le k2^{-2m} \le K} |\tilde{B}_{m+1}(k2^{-2m})
-\tilde{B}_m(k2^{-2m})| \ge K_*^{\quart}(\log_*K)^{\frac34}
m2^{-\frac{m}{2}} \rb \\
\le 3(K2^{2m})^{1-C} ,
\end{eqnarray*}
where $K_* = \max\{1, K\}$.
\end{lemA}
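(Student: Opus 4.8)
The plan is to compare $\tilde{B}_{m+1}$ and $\tilde{B}_m$ by interpolating through the common dyadic values they take, using the refinement property to relate those values and the time-lag estimate of Lemma~\ref{le:tlag} to control where they are taken. Fix $\omega$ and a point $t=k2^{-2m}\le K$. By \eqref{eq:refin}, $\tilde{B}_{m+1}(T_{m+1}(k)2^{-2(m+1)})=\tilde{B}_m(k2^{-2m})$, so
\[
|\tilde{B}_{m+1}(k2^{-2m})-\tilde{B}_m(k2^{-2m})|
= |\tilde{B}_{m+1}(k2^{-2m})-\tilde{B}_{m+1}(T_{m+1}(k)2^{-2(m+1)})|,
\]
i.e. the difference of the two consecutive approximations at a dyadic point is exactly the oscillation of the single walk $\tilde{B}_{m+1}$ over the time interval between $k2^{-2m}$ and its lagged image $T_{m+1}(k)2^{-2(m+1)}$. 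The first step is therefore to bound the length of that interval: by Lemma~\ref{le:tlag}, on an event of probability at least $1-2(K2^{2m})^{1-C}$ we have, simultaneously for all such $k$,
\[
|T_{m+1}(k)2^{-2(m+1)}-k2^{-2m}| \le \left(\tfrac32 CK\log{}_*K\right)^{\half} m^{\half} 2^{-m}
\]
once $m\ge m_0(C)$.

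The second step is to bound the oscillation of $\tilde{B}_{m+1}$ over time intervals of that length. Writing things back on the lattice scale, $\tilde{B}_{m+1}$ oscillates over an interval of length $\delta$ by at most $2^{-(m+1)}$ times the maximal fluctuation of the simple symmetric walk $\tilde{S}_{m+1}$ over roughly $\delta\,2^{2(m+1)}$ steps; here $\delta\,2^{2(m+1)}$ is of order $(CK\log{}_*K)^{\half} m^{\half} 2^{m}$. Since $\tilde{S}_{m+1}$ is a simple symmetric random walk \cite[Lemma 1]{Sza96}, its increments are sums of $\pm1$ i.i.d. variables and the estimate \eqref{eq:ldp2} (equivalently, the Hoeffding-type bound) applies with variance equal to the number of steps. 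I would cover $[0,K]$ by $O(K2^{2m})$ overlapping blocks of this length, apply \eqref{eq:ldp2} to the maximal fluctuation within each block with the constant chosen so that the $(2N\log N)^{\half}$-type bound there dominates $K_*^{\quart}(\log_*K)^{\frac34}m\,2^{-m/2}$ after multiplying by the spatial scale $2^{-(m+1)}$, and collect the union bound into another $2(K2^{2m})^{1-C}$ (possibly after enlarging $C$ slightly or absorbing constants into $m_1(C)$). Adding the two exceptional probabilities and the negligible contribution from the linear-interpolation between lattice points gives the claimed $3(K2^{2m})^{1-C}$.

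A technical point to handle carefully is that the interval between $k2^{-2m}$ and $T_{m+1}(k)2^{-2(m+1)}$ is itself random (it depends on the path of $\tilde{S}_{m+1}$), so one cannot directly apply an i.i.d. fluctuation bound over that interval; the standard fix, and the one I would use, is to first intersect with the event from Lemma~\ref{le:tlag} that uniformly caps the lag length, and then bound the oscillation over \emph{every} deterministic window of that capped length via the block/union-bound argument above — this decouples the randomness of the window from the randomness of the increments. The main obstacle is precisely this bookkeeping: getting the numerology to line up so that the product of the spatial scale $2^{-(m+1)}$, the window length $\sim (CK\log{}_*K)^{\half}m^{\half}2^{m}$, and the $\sqrt{\text{length}\times\log(\#\text{blocks})}$ fluctuation factor collapses exactly to $K_*^{\quart}(\log_*K)^{\frac34}m\,2^{-m/2}$ while keeping the failure probability of the form $c(K2^{2m})^{1-C}$. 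Everything else is a routine combination of \eqref{eq:refin}, Lemma~\ref{le:tlag}, and \eqref{eq:ldp2}.
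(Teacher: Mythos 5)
Your proposal is correct and follows essentially the intended argument: the paper states Lemma \ref{le:refin} without proof (quoting it from \cite{Sza96}), and the proof there is exactly your combination of the refinement property \eqref{eq:refin}, the time-lag bound of Lemma \ref{le:tlag}, and a large-deviation/Hoeffding bound \eqref{eq:ldp2} on the oscillation of $\tilde{B}_{m+1}$ over deterministic windows of the capped lag length, which is also the technique the paper itself uses in the proof of Lemma \ref{le:approxNm}. One bookkeeping correction: you cannot ``enlarge $C$ slightly,'' since the estimate must hold for the given $C$; instead choose the internal exponent $C'$ in the union bound large enough that the oscillation term contributes at most $(K2^{2m})^{1-C}$ (the paper does this elsewhere via $1-C'=\frac{2}{3}(1-C)$), so that the exceptional probabilities add up to the stated $3(K2^{2m})^{1-C}$ rather than your $2+2$.
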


Based on this lemma, it is not difficult to show the following
convergence result.

\begin{thmA} \label{th:Wiener}
The shrunken RWs $\tilde{B}_m(t)$ $(t\ge 0, m=0,1,2,\dots)$ almost surely
uniformly converge to a Wiener process $W(t)$ $(t\ge 0)$ on any compact
interval $[0,K]$, $K>0$. For any $K>0$, $C\ge 3/2$, and for any $m\ge m_2(C)$,
we have
\[
\PP \lb \sup_{0 \le t \le K} |W(t) - \tilde{B}_m(t)| \ge
K_*^{\quart}(\log_*K)^{\frac34} m 2^{-\frac{m}{2}} \rb \le
6(K2^{2m})^{1-C} .
\]
\end{thmA}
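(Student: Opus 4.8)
The plan is to derive Theorem~\ref{th:Wiener} from Lemma~\ref{le:refin} by a standard telescoping/Borel--Cantelli argument. First I would fix $K>0$ and $C\ge 3/2$, and for $m\ge m_1(C)$ consider the events
\[
A_m = \left\{ \sup_{0\le k2^{-2m}\le K} |\tilde{B}_{m+1}(k2^{-2m}) - \tilde{B}_m(k2^{-2m})| \ge K_*^{\quart}(\log_*K)^{\frac34} m 2^{-\frac{m}{2}} \right\}.
\]
By Lemma~\ref{le:refin}, $\PP(A_m)\le 3(K2^{2m})^{1-C}$, and since $C\ge 3/2$ the series $\sum_m 3(K2^{2m})^{1-C}$ converges; hence by Borel--Cantelli only finitely many $A_m$ occur almost surely. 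On the complement, the increments $\tilde{B}_{m+1}-\tilde{B}_m$ (evaluated at the dyadic points $k2^{-2m}$) are summable, so the sequence $\tilde{B}_m$ is uniformly Cauchy along those points and converges; the limit $W$ is continuous because each $\tilde{B}_m$ is continuous (piecewise linear) and the convergence is uniform. That $W$ is a Wiener process follows because each $\tilde{B}_m$ is a nearly-Brownian interpolated simple random walk with the correct scaling — this is where I would cite the corresponding fact from \cite{Sza96} (convergence in distribution plus the a.s. uniform convergence pins down the limit law), or alternatively verify directly that $W$ has stationary independent Gaussian increments and continuous paths.

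The one subtlety is that Lemma~\ref{le:refin} compares $\tilde{B}_{m+1}$ and $\tilde{B}_m$ only at the grid points $k2^{-2m}$, whereas the theorem asserts a bound on $\sup_{0\le t\le K}|W(t)-\tilde{B}_m(t)|$ over \emph{all} real $t$. To pass from grid points to all $t$ I would use that $\tilde{B}_m$ is linear on each interval $[k2^{-2m},(k+1)2^{-2m}]$ of length $2^{-2m}$: on such an interval $\tilde{B}_m$ moves by exactly $\pm 2^{-m}$, and $\tilde{B}_{m+1}$ — while it may wander — is controlled by the oscillation of the underlying walk on a short block, again estimable via Lemma~\ref{le:ldi} or the maximal inequality~(\ref{eq:ldp2}). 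The contribution of this ``within-cell'' discrepancy is of smaller order than $m2^{-m/2}$ and is absorbed into the constant; this is the step I expect to require the most care, though it is routine in spirit.

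Finally, to get the stated quantitative bound I would telescope: for $t\le K$ and $m\ge m_2(C)$,
\[
|W(t) - \tilde{B}_m(t)| \le \sum_{j=m}^{\infty} \sup_{0\le t\le K}|\tilde{B}_{j+1}(t) - \tilde{B}_j(t)| \le \sum_{j=m}^{\infty} c\, K_*^{\quart}(\log_*K)^{\frac34} j\, 2^{-\frac{j}{2}},
\]
and the geometric-type tail sum $\sum_{j\ge m} j 2^{-j/2}$ is bounded by a constant times $m2^{-m/2}$. Choosing $m_2(C)$ large enough that this constant is at most, say, the value making the probability bound $6(K2^{2m})^{1-C}$ valid (the factor $6$ versus $3$ in Lemma~\ref{le:refin} leaving room for the within-cell term and the tail constant), one obtains exactly the asserted inequality. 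The probability bound itself comes from summing $\PP(A_j)$ for $j\ge m$ together with the probability of the within-cell bad event, each geometric in $2^{2m}$, so the total is at most $6(K2^{2m})^{1-C}$ for $m$ large.
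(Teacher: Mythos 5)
Your plan follows the route the paper intends: Theorem~\ref{th:Wiener} is quoted from \cite{Sza96}, and the paper itself only remarks that it follows from Lemma~\ref{le:refin}, which is precisely your telescoping/Borel--Cantelli scheme. The probability bookkeeping even comes out exactly: for $C\ge 3/2$, $\sum_{j\ge m}3(K2^{2j})^{1-C}\le 3(K2^{2m})^{1-C}\sum_{i\ge 0}2^{-i}=6(K2^{2m})^{1-C}$, the stated factor $6$. Also, your ``within-cell'' worry is easier than you make it: on a coarse cell of length $2^{-2j}$ the walk $\tilde B_j$ moves by exactly $2^{-j}$, while $\tilde B_{j+1}$ crosses four of its own cells and so oscillates by at most $2\cdot 2^{-j}$; hence for every real $t$ in the cell the difference exceeds its value at the left grid point by at most $3\cdot 2^{-j}$, deterministically --- no appeal to Lemma~\ref{le:ldi} or to (\ref{eq:ldp2}) is needed, which is fortunate because at $C=3/2$ the budget $6(K2^{2m})^{1-C}$ is already exhausted by the events of Lemma~\ref{le:refin} alone, leaving no room for an extra ``within-cell bad event''.

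The genuine gap is in the threshold, not in the probability. On the good event your telescoping gives $\sup_{0\le t\le K}|W(t)-\tilde B_m(t)|\le \sum_{j\ge m}K_*^{1/4}(\log_*K)^{3/4}\,j\,2^{-j/2}+O(2^{-m})$, and $\sum_{j\ge m}j2^{-j/2}=(1-2^{-1/2})^{-1}m2^{-m/2}+O(2^{-m/2})\approx 3.42\,m2^{-m/2}$. This constant multiplies the threshold, so it cannot be disposed of ``by choosing $m_2(C)$ large enough'': enlarging $m$ does nothing to a fixed multiplicative factor, and the slack between $3$ and $6$ lives in the probability estimate, not in the threshold. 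Used as a black box, Lemma~\ref{le:refin} therefore yields the theorem only with an extra constant (roughly $3.5$) in front of $K_*^{1/4}(\log_*K)^{3/4}m2^{-m/2}$. To get constant $1$ as stated one needs per-level estimates whose threshold carries a constant smaller than $1-2^{-1/2}$, so that the tail sum telescopes to the stated bound; such sharper bounds come from inside the proofs of \cite{Sza96}, which is exactly why the paper says its quoted forms ``can be read easily from the proofs in the cited reference'' rather than from the displayed lemmas themselves. Everything else in your argument --- the a.s. uniform Cauchy property, continuity of the limit, and the identification of $W$ as a Wiener process by combining the CLT for the scaled simple walks with the a.s. uniform convergence --- is sound.
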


Now taking $C=3$ in Theorem \ref{th:Wiener} and using the Borel--Cantelli
lemma, we get
\[
\sup_{0 \le t \le K} |W(t) - \tilde{B}_m(t)| < O(1) m
2^{-\frac{m}{2}} \qquad \mbox{a.s.} \qquad (m \to \infty)
\]
and
\[
\sup_{0 \le t \le K} |W(t) - \tilde{B}_m(t)| <
K^{\quart} (\log K)^{\frac34} \qquad \mbox{a.s.} \qquad (K \to \infty)
\]
for any $m$ large enough, $m \ge m_2(3)$.

Next we are going to study the properties of another nested
sequence of random walks, obtained by Skorohod embedding. This
sequence is not identical, though asymptotically equivalent to the
above RW construction, cf. \cite[Theorem 4]{Sza96}. Given a Wiener
process $W$, first we define the stopping times which yield the
Skorohod embedded process $B_m(k2^{-2m})$ into $W$. For every
$m\ge 0$ let $s_m(0)=0$ and
\begin{equation}
s_m(k+1)=\inf{}\{s: s > s_m(k), |W(s)-W(s_m(k))|=2^{-m}\} \qquad
(k \ge 0). \label{eq:Skor1}
\end{equation}
With these stopping times the embedded process by definition is
\begin{equation}
B_m(k2^{-2m}) = W(s_m(k)) \qquad (m\ge 0, k\ge 0). \label{eq:Skor2}
\end{equation}
This definition of $B_m$ can be extended to any real $t \ge 0$ by
pathwise linear interpolation. The next lemma describes some useful
facts about the relationship between $\tilde{B}_m$ and $B_m$. These
follow from \cite[Lemmas 5,7 and Theorem 4]{Sza96}, with some minor
modifications.

In general, roughly saying, $\tilde{B}_m$ is more useful when
someone wants to generate stochastic processes from scratch, while
$B_m$ is more advantageous when someone needs a discrete
approximation of given processes, like in the case of stochastic
integration.

\begin{lemA} \label{le:equid}
For any $C\ge 3/2$, $K>0$, take the following subset of the sample
space:
\begin{equation}
A_m=\lb \sup_{n > m} \: \sup_{0\le k2^{-2m}\le K} |2^{-2n}
T_{m,n}(k) - k2^{-2m}| < 6(CK_* \log_*K)^{\half} m^{\half} 2^{-m} \rb ,
\label{eq:A_m}
\end{equation}
where $T_{m,n}(k) = T_n \circ T_{n-1}
\circ \cdots \circ T_m(k)$ for $n>m\ge 0$ and $k\ge 0$.
Then for any $m\ge m_3(C)$,
\[
\PP \lb A_m^c \rb \le 4(K2^{2m})^{1-C} .
\]
Moreover, $\lim_{n\to \infty} 2^{-2n} T_{m,n}(k) = t_m(k)$ exists almost
surely and on the set $A_m$ we have
\[
\tilde{B}_m(k2^{-2m}) = W(t_m(k)) \qquad (0 \le k2^{-2m} \le K) ,
\]
cf. (\ref{eq:Skor2}). Further, on $A_m$ except for a zero
probability subset, $s_m(k) = t_m(k)$ and
\begin{equation}
\sup_{0\le k2^{-2m}\le K} |s_m(k)-k2^{-2m}| \le 6(CK_* \log_* K)
^{\half} m^{\half} 2^{-m} \qquad (m\ge m_3(C)) . \label{eq:sm}
\end{equation}
\end{lemA}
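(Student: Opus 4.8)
\medskip
\noindent\textbf{Proof strategy.}
The backbone is the iterated refinement property obtained by composing (\ref{eq:refin}) over consecutive levels:
\[
\tilde{B}_n\bigl(T_{m,n}(k)2^{-2n}\bigr)=\tilde{B}_m\bigl(k2^{-2m}\bigr)\qquad(n>m\ge 0,\ k\ge 0),
\]
with the convention $T_{m,m}(k)=k$. If one knows that $t_m(k)=\lim_{n\to\infty}2^{-2n}T_{m,n}(k)$ exists, then letting $n\to\infty$ here — the right side is fixed, $\tilde{B}_n\to W$ uniformly on compacts by Theorem~\ref{th:Wiener}, and $W$ is continuous — gives $W(t_m(k))=\tilde{B}_m(k2^{-2m})$ at once. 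So everything is reduced to the quantitative control of $2^{-2n}T_{m,n}(k)-k2^{-2m}$ that is packaged into $A_m$.

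For that control I would telescope,
\[
2^{-2n}T_{m,n}(k)-k2^{-2m}=\sum_{j=m}^{n-1}\Bigl(2^{-2(j+1)}T_{j+1}\bigl(T_{m,j}(k)\bigr)-2^{-2j}T_{m,j}(k)\Bigr),
\]
each summand being a single-level time lag of the type estimated in Lemma~\ref{le:tlag}, applied at level $j$ with $T_{m,j}(k)$ in the role of $k$ over the slightly enlarged interval $[0,K+1]$ (a one-line bootstrap shows that, on the event where all these single-level bounds hold, $2^{-2j}T_{m,j}(k)\le K+1$ for every $j$ and every admissible $k$). Taking the supremum over $k$ with $k2^{-2m}\le K$ and over $n>m$, then summing the series $\sum_{j\ge m}(\tfrac32 C(K+1)\log_*(K+1))^{\half}j^{\half}2^{-j}$ — which up to an absolute factor is dominated by its leading term $m^{\half}2^{-m}$ — leaves a bound of the asserted form $6(CK_*\log_* K)^{\half}m^{\half}2^{-m}$ after the numerical constants are collected carefully. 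Likewise, the union over $j>m$ of the exceptional sets of Lemma~\ref{le:tlag} contributes $\sum_{j\ge m}2(K_*2^{2j})^{1-C}$, a geometric series with ratio at most $\tfrac12$ when $C\ge\tfrac32$, so $\PP(A_m^c)\le 4(K2^{2m})^{1-C}$ for $m\ge m_3(C)$. The same telescoping shows that on the good set $2^{-2n}T_{m,n}(k)$ is Cauchy in $n$ (its tail increments are $\le\sum_{j\ge n}(\cdots)j^{\half}2^{-j}\to 0$), and since $\PP(A_m^c)\to0$ as $C\to\infty$ the limit $t_m(k)$ exists almost surely; the identity $W(t_m(k))=\tilde{B}_m(k2^{-2m})$ on $A_m$ then follows by splitting $|W(t_m(k))-\tilde{B}_m(k2^{-2m})|$ into $|W(t_m(k))-W(T_{m,n}(k)2^{-2n})|$, small by continuity, plus $\sup_{[0,K+1]}|W-\tilde{B}_n|$, small by Theorem~\ref{th:Wiener}, the refinement identity killing the last term. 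So far everything is mechanical given Lemma~\ref{le:tlag} and Theorem~\ref{th:Wiener}.

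It remains to identify $t_m(k)$ with the Skorohod time $s_m(k)$ of (\ref{eq:Skor1})--(\ref{eq:Skor2}). On $A_m$ the values $W(t_m(0)),W(t_m(1)),\dots$ lie on the lattice $2^{-m}\ZZ$ with successive increments of modulus exactly $2^{-m}$ (they are the values of $2^{-m}\tilde{S}_m$), so $s_m(k)=t_m(k)$ for all $k$ will follow by induction once one knows that $W$ stays inside the \emph{open} band of radius $2^{-m}$ about $W(t_m(k))$ on $(t_m(k),t_m(k+1))$. This rests on the deterministic confinement fact that, for every $n>m$, the walk $\tilde{B}_n$ stays strictly within $2^{-m}$ of $\tilde{B}_m(k2^{-2m})$ throughout $[T_{m,n}(k)2^{-2n},T_{m,n}(k+1)2^{-2n}]$, attaining distance $2^{-m}$ only at the right endpoint; this is proved by induction on $n$ from the definition of the stopping times $T_n$ — which halt the finer walk the instant it has moved by two of its spatial units past a grid point of the coarser level — together with the refinement property, and is essentially \cite[Lemma~5]{Sza96}. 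Passing to the limit $n\to\infty$ via $T_{m,n}(j)2^{-2n}\to t_m(j)$, uniform convergence and continuity only yields $|W(s)-W(t_m(k))|\le2^{-m}$ on $[t_m(k),t_m(k+1)]$; upgrading this to the strict inequality on the open interval — i.e. ruling out that $W$ touch $W(t_m(k))\pm2^{-m}$ before $t_m(k+1)$ — forces one to discard a further null set, which is the source of the ``except for a zero probability subset'' proviso. Once $s_m(k)=t_m(k)$ is established, (\ref{eq:sm}) is immediate, since $|t_m(k)-k2^{-2m}|=\lim_n|2^{-2n}T_{m,n}(k)-k2^{-2m}|$ is bounded by the very radius $6(CK_*\log_* K)^{\half}m^{\half}2^{-m}$ defining $A_m$. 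I expect this last paragraph — setting up the strict confinement of $\tilde{B}_n$ between consecutive $T_{m,n}$-times and disposing of the ``premature hitting'' null set in the limit — to be the main obstacle.
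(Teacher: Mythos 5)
The paper itself contains no proof of this lemma --- it is imported from \cite[Lemmas 5, 7 and Theorem 4]{Sza96} --- and your reconstruction follows essentially that same route (telescoping the single-level time lags of Lemma~\ref{le:tlag} along the iterated refinement identity, the confinement of the finer walks between composite stopping times, and a discarded null set to exclude premature hitting of the levels $\pm 2^{-m}$ before $t_m(k+1)$), and it is sound in outline. Two details to tidy when writing it out: enlarge the range by $K_*$ rather than by $1$ in the bootstrap, so that the threshold $m_3(C)$ stays independent of $K$, and justify the almost sure existence of $t_m(k)$ by Borel--Cantelli at a fixed $C$ (the remark that $\PP\{A_m^c\}\to 0$ as $C\to\infty$ conflicts with the constraint $m\ge m_3(C)$).
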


If the Wiener process is built by the RW construction described above
using a sequence $\tilde{B}_m$ ($m \ge 0$) of nested RWs and then one
constructs the Skorohod embedded RWs $B_m$ ($m \ge 0$), it is natural
to ask what the approximating properties of the latter are. The answer
described by the next theorem is that they are essentially the same as
the ones of $\tilde{B}_m$, cf. Theorem \ref{th:Wiener}.

\begin{thm} \label{th:Wienerm}
For every $K>0$, $C \ge 3/2$ and $m\ge m_3(C)$ we have
\[
\PP \lb \sup_{0\le t\le K} \left| W(t) - B_m(t) \right|
\ge K_*^{\quart}(\log_*K)^{\frac34} m 2^{-\frac{m}{2}} \rb
\le 10(K2^{2m})^{1-C} .
\]
\end{thm}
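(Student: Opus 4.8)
The plan is to bound $|W(t)-B_m(t)|$ by splitting it into two pieces via the triangle inequality, namely
\[
|W(t)-B_m(t)| \le |W(t)-\tilde B_m(t)| + |\tilde B_m(t)-B_m(t)|,
\]
and to control each term on a suitable high-probability subset of the sample space. The first term is already handled by Theorem~\ref{th:Wiener}: on an event of probability at least $1-6(K2^{2m})^{1-C}$ (for $m\ge m_2(C)$) it is at most $K_*^{\quart}(\log_*K)^{\frac34} m 2^{-\frac{m}{2}}$. So the real work is the second term, the discrepancy between the ``twist and shrink'' walk $\tilde B_m$ and the Skorohod-embedded walk $B_m$, and this is where Lemma~\ref{le:equid} does the heavy lifting.

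Working on the set $A_m$ of Lemma~\ref{le:equid} (minus a null set), we have the two identities $\tilde B_m(k2^{-2m}) = W(t_m(k))$ and $B_m(k2^{-2m}) = W(s_m(k))$ with $s_m(k)=t_m(k)$, so in fact $\tilde B_m$ and $B_m$ agree at the dyadic grid points $k2^{-2m}$. The only difference between the two processes is therefore the linear interpolation between consecutive grid points; but since they agree at the endpoints of each interpolation interval $[k2^{-2m},(k+1)2^{-2m}]$, the two piecewise-linear functions coincide identically. Hence $\sup_{0\le t\le K}|\tilde B_m(t)-B_m(t)| = 0$ on $A_m$ outside a null set. (One should check the grid endpoints fall within $[0,K]$ up to the usual rounding, which is routine.) Thus on $A_m \cap \{\sup|W-\tilde B_m| < K_*^{\quart}(\log_*K)^{\frac34} m 2^{-\frac{m}{2}}\}$ we get the desired bound for $\sup_{0\le t\le K}|W(t)-B_m(t)|$.

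It then remains to assemble the probability estimate. Taking complements and using subadditivity,
\[
\PP\lb \sup_{0\le t\le K}|W(t)-B_m(t)| \ge K_*^{\quart}(\log_*K)^{\frac34} m 2^{-\frac{m}{2}} \rb \le \PP(A_m^c) + \PP\lb \sup_{0\le t\le K}|W(t)-\tilde B_m(t)| \ge K_*^{\quart}(\log_*K)^{\frac34} m 2^{-\frac{m}{2}} \rb,
\]
and Lemma~\ref{le:equid} bounds the first term by $4(K2^{2m})^{1-C}$ while Theorem~\ref{th:Wiener} bounds the second by $6(K2^{2m})^{1-C}$, for $m$ at least $\max\{m_2(C),m_3(C)\}$; since $m_3(C)$ is already assumed to dominate in the statement, the sum is $\le 10(K2^{2m})^{1-C}$, which is exactly the claim.

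I do not expect a genuine obstacle here: the main subtlety is purely bookkeeping — confirming that on $A_m$ the two walks really coincide on all of $[0,K]$ (not merely at grid points that lie strictly inside), and making sure the threshold index $m_3(C)$ indeed exceeds $m_2(C)$ so that Theorem~\ref{th:Wiener} is applicable wherever Lemma~\ref{le:equid} is. Once those are in place the estimate is immediate from the two cited results and a union bound, with no new large-deviation input required.
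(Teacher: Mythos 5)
Your proposal is correct and follows essentially the same route as the paper: the same triangle-inequality split into $|W-\tilde B_m|$ and $|\tilde B_m-B_m|$, the same use of Lemma~\ref{le:equid} to identify $\tilde B_m$ and $B_m$ at the dyadic grid points on $A_m$ (hence everywhere, by linear interpolation), and the same union bound combining $\PP(A_m^c)\le 4(K2^{2m})^{1-C}$ with the $6(K2^{2m})^{1-C}$ bound from Theorem~\ref{th:Wiener}. Your remark about taking $m_3(C)\ge m_2(C)$ is exactly the implicit convention the paper uses, so there is no gap.
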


\begin{proof}
By the triangle inequality,
\[
\sup_{0\le t\le K} \left| W(t) - B_m(t) \right| \le
\sup_{0\le t\le K} \left| W(t) - \tilde{B}_m(t) \right|
+ \sup_{0\le t\le K} \left| \tilde{B}_m(t) - B_m(t) \right| .
\]
By Lemma \ref{le:equid} and equation (\ref{eq:Skor2}), on the set $A_m$
defined by (\ref{eq:A_m}) we have
\[ \tilde{B}_m(k2^{-2m}) = W(s_m(k)) = B_m(k2^{-2m}), \]
except for a zero probability subset when $m \ge m_3(C)$. Since both
$\tilde{B}_m(t)$ and $B_m(t)$ are obtained by pathwise linear
interpolation based on the vertices at $k2^{-2m} \in [0, K]$, they are
identical on $A_m$, except for a zero probability subset of it when $m
\ge m_3(C)$. Thus
\begin{eqnarray*}
\lefteqn{\PP \lb \sup_{0\le t\le K} \left| W(t) - B_m(t) \right|
\ge K_*^{\quart}(\log_*K)^{\frac34} m 2^{-\frac{m}{2}} \rb} \\
&\le& \PP \lb A_m^c \rb + \PP \lb \sup_{0\le t\le K} \left|
W(t) - \tilde{B}_m(t) \right|
\ge K_*^{\quart}(\log_*K)^{\frac34} m 2^{-\frac{m}{2}} \rb
\end{eqnarray*}
Then by Theorem \ref{th:Wiener} and Lemma \ref{le:equid} we
get the statement of the theorem.
\end{proof}

\section{The basic approximation results}

Beside the RW construction of standard Brownian motion, the other
main tool applied in this paper is a theorem of Dambis (1965) and
Dubins--Schwarz (1965) and an extension of it, cf. Theorems
\ref{DDS1} and \ref{DDS2} below. Briefly saying, these theorems state
that any continuous local martingale $(M(t), t \ge 0)$ can be
transformed into a standard Brownian motion by time-change. Then
somewhat loosely speaking, the resulting Brownian motion takes on the
same values in the same order as $M(t)$, only the corresponding time
instants may differ. These and other necessary matters about
continuous local martingales will be taken from and discussed in the
style of \cite{Yor99} in the sequel.

Below it is supposed that an increasing family of sub-$\sigma$-algebras
$(\mathcal{F}_t, t \ge 0)$ is given in the probability space $(\Omega,
\mathcal{F}, \PP)$ and the given continuous local martingale
$M$ is adapted to it.

In the case of a continuous local martingale $M(t)$ vanishing at 0
its quadratic variation $\qv{t}$ is a process with almost surely  continuous
and non-decreasing sample paths vanishing at 0.
This will be one of the two time-changes we are going to use in the
sequel. The other one is a quasi-inverse of the quadratic variation:
\begin{equation} \label{eq:T}
T_s=\inf \{ t: \qv{t} > s \}  ,
\end{equation}
where $\inf (\emptyset) = \infty$ by definition. Then the sample paths
of the process $T_s$ are almost surely increasing, but only
right-continuous, since such a path has a jump at any value where the
quadratic variation has a constant level-stretch. Beside this, $T_s$
may be infinite valued. The duality  between the two time-changes
is expressed by $\qv{t} = \inf \{ s: T_s > t \}$.
Observe that $T_s$ cannot have constant level-stretches since this would
imply jumps for $\qv{t}$. Also
the continuity of $\qv{t}$ gives that $\qv{T_s} = s$ ($s \ge 0$), while
we have only $ T_{\qv{t}} \ge t$ ($t \ge 0$) in the opposite
direction. It is clear that
\begin{equation}
\qv{t} <  \: s  \Longrightarrow  t <  \: T_s,
\mbox{ but }  t <  \: T_s  \Longrightarrow  \qv{t} \le  \: s ,
\label{eq:tchange1}
\end{equation}
while
\begin{equation}
\qv{t} \: \le, \: \ge, \: > \:  s  \iff  t \: \le, \: \ge,\: > \: T_s ,
\label{eq:tchange2}
\end{equation}
respectively.

\begin{thmA} \label{DDS1} \cite[V (1.6), p.181]{Yor99}
If $M$ is a continuous $(\mathcal{F}_t)$-local martingale
vanishing at 0 and such that $\qv{\infty}=\infty$ a.s., then
$W(s)=M(T_s)$ is an $(\mathcal{F}_{T_s})$-Brownian motion and
$M(t)=W(\qv{t})$.
\end{thmA}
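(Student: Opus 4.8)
The plan is to verify that $W(s) = M(T_s)$ is a continuous $(\mathcal{F}_{T_s})$-martingale with quadratic variation $\langle W, W \rangle_s = s$, so that L\'evy's characterization identifies it as a Brownian motion; the identity $M(t) = W(\langle M, M\rangle_t)$ then follows by composing the time-changes. First I would record the analytic input already assembled in the excerpt: under the hypothesis $\langle M, M\rangle_\infty = \infty$ a.s., the process $\langle M, M\rangle_t$ is a.s. continuous, non-decreasing, and runs through all of $[0,\infty)$, so the quasi-inverse $T_s$ of \eqref{eq:T} is a.s. finite, right-continuous, strictly increasing, with $\langle M, M\rangle_{T_s} = s$. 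A key preliminary step is to show that $t \mapsto M(t)$ is a.s.\ constant on each level-stretch of $\langle M, M\rangle$, i.e.\ that $M(T_{s-}) = M(T_s)$; this is the standard fact that a continuous local martingale does not move while its quadratic variation is flat, proved by localizing and using that $\langle M, M\rangle$ is constant on such an interval forces the $L^2$-increment to vanish. With this, $W(s) = M(T_s)$ is a.s.\ continuous in $s$ (right-continuity is immediate from right-continuity of $T_s$ and continuity of $M$; left-continuity uses the no-motion-on-flat-stretches fact).

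Next I would set up the filtration $\mathcal{G}_s := \mathcal{F}_{T_s}$ and check it satisfies the usual conditions (right-continuity follows from right-continuity of $s \mapsto T_s$), and that each $T_s$ is an $(\mathcal{F}_t)$-stopping time — this is exactly the content of \eqref{eq:tchange2}, since $\{T_s \le t\} = \{\langle M, M\rangle_t \ge s\} \in \mathcal{F}_t$ by continuity and adaptedness of $\langle M, M\rangle$. Then I would apply the optional stopping theorem to the time-changed process: first reduce to the case where $M$ is a genuine $L^2$-bounded martingale by a localization argument (using a reducing sequence of stopping times $\tau_n \uparrow \infty$ and noting $T_s \wedge \tau_n$ are bounded stopping times), and invoke the optional sampling theorem for the pair $T_{s_1} \le T_{s_2}$ to get $\EE[M(T_{s_2}) \mid \mathcal{F}_{T_{s_1}}] = M(T_{s_1})$, i.e.\ $W$ is a $(\mathcal{G}_s)$-martingale. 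The same localization applied to $M^2 - \langle M, M\rangle$ (which is a local martingale) gives that $W(s)^2 - s = M(T_s)^2 - \langle M, M\rangle_{T_s}$ is a $(\mathcal{G}_s)$-local martingale, hence $\langle W, W\rangle_s = s$.

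At this point L\'evy's characterization theorem (a continuous $(\mathcal{G}_s)$-local martingale vanishing at $0$ with quadratic variation $s$ is a $(\mathcal{G}_s)$-Brownian motion) yields that $W$ is a Brownian motion. Finally, for the reverse identity $M(t) = W(\langle M, M\rangle_t)$: evaluate $W$ at $s = \langle M, M\rangle_t$ to get $W(\langle M, M\rangle_t) = M(T_{\langle M, M\rangle_t})$, and although $T_{\langle M, M\rangle_t} \ge t$ may be a strict inequality (precisely when $t$ lies in the interior of a level-stretch, see \eqref{eq:tchange1}), the no-motion-on-flat-stretches fact gives $M(T_{\langle M, M\rangle_t}) = M(t)$, completing the proof. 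The main obstacle is the careful handling of the level-stretches of $\langle M, M\rangle$ — establishing that $M$ is constant there and that $W$ is continuous across the corresponding jumps of $T_s$ — together with getting the localization clean enough that optional sampling genuinely applies to the unbounded stopping times $T_s$; everything else is bookkeeping with \eqref{eq:tchange1}--\eqref{eq:tchange2} and a citation to L\'evy's theorem.
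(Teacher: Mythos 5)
Theorem \ref{DDS1} is quoted background: the paper gives no proof of its own and simply cites Revuz--Yor V (1.6), so there is no internal argument to compare against. Your sketch is essentially the standard proof from that reference: $T_s$ is a stopping time, $W(s)=M(T_s)$ is continuous because $M$ is constant on the level stretches of $\qv{}$, optional sampling applied to $M$ and to $M^2-\qv{}$ shows $W$ is a continuous $(\mathcal{F}_{T_s})$-local martingale with $\langle W,W\rangle_s=s$, L\'evy's characterization concludes, and $M(t)=W(\qv{t})$ follows again from constancy on flat stretches; this is correct in outline. Two details deserve tightening: the identity $\{T_s\le t\}=\{\qv{t}\ge s\}$ can fail when $\qv{t}=s$ at the start of a level stretch (only $\{T_s<t\}=\{\qv{t}>s\}$ holds in general, so one appeals to right-continuity of the filtration to get the stopping-time property --- note that (\ref{eq:tchange2}) in the paper is equally loose about the $\ge$ case), and the passage to the limit in the optional-sampling step should be justified explicitly by the bound $\qv{T_s}=s$, which makes the stopped process $M^{T_s}$ an $L^2$-bounded, hence uniformly integrable, martingale rather than merely a local one.
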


Similar statement is true when $\qv{\infty}<\infty$ is possible. Note
that on the set $\lb \qv{\infty}<\infty \rb$ the limit $M(\infty)
=\lim_{t\to\infty} M(t)$ exists with probability 1, cf. \cite[IV
(1.26), p. 131]{Yor99}.

\begin{thmA} \label{DDS2} \cite[V (1.7), p.182]{Yor99}
If $M$ is a continuous $(\mathcal{F}_t)$-local martingale
vanishing at 0 and such that $\qv{\infty}<\infty$ with positive
probability, then there exists an enlargement
$(\widetilde{\Omega},\widetilde{ \mathcal{F}}_t, \widetilde{\PP})$
of $(\Omega,\mathcal{F}_{T_t},\PP)$ and a Wiener process
$\widetilde{\beta}$ on $\widetilde{\Omega}$ independent of $M$
such that the process
\[
W(s)=\left\{
\begin{array}{ll}
M(T_s) &\mbox{if } s < \qv{\infty} , \\
M(\infty) + \widetilde{\beta}(s - \qv{\infty}) &\mbox{if }
s \ge \qv{\infty}
\end{array}
\right.
\]
is a standard Brownian motion and $M(t)=W(\qv{t})$ for $t \ge 0$.
\end{thmA}

From now on, $W$ will always refer to the Wiener process obtained
from $M$ by the above time-change, the so-called \emph{DDS Wiener
process (or DDS Brownian motion)} of $M$.

Now Skorohod-type stopping times can be defined for $M$, similarly as
for $W$ in (\ref{eq:Skor1}). For $m \ge 0$, let $\tau_m(0)=0$ and
\begin{equation}
\tau_m(k+1)=\inf \left\{t: t > \tau_m(k), \left| M(t) - M(\tau_m(k))
\right| = 2^{-m} \right\} \qquad (k \ge 0) . \label{eq:tau}
\end{equation}
The $(m+1)$st stopping time sequence is a refinement of the $m$th
in the sense that $\left(\tau_m(k)\right)_{k=0}^{\infty}$ is a
subsequence of $\left(\tau_{m+1}(j)\right)_{j=0}^{\infty}$ so that
for any $k\ge 0$ there exist $j_1$ and $j_2$, $\tau_{m+1}(j_1) =
\tau_m(k)$ and $\tau_{m+1}(j_2) = \tau_m(k+1)$, where the
difference $j_2 - j_1 \ge 2$, even.

\begin{lem} \label{le:SkorM}
With the stopping times defined by (\ref{eq:tau}) from a
continuous local martingale $M$ one can directly obtain the
sequence of shrunken RWs that almost surely converges to the DDS Wiener
process $W$ of $M$, cf. (\ref{eq:Skor2}):
\[
B_m(k2^{-2m}) = W(s_m(k)) = M(\tau_m(k)) , \qquad s_m(k)
= \qv{\tau_m(k)}
\]
[but $\tau_m(k) \le T_{s_m(k)}$ ], where for $m\ge 0$, the non-negative integer $k$ is
taking values (depending on $\omega$) until $s_m(k) \le
\qv{\infty}$.
\end{lem}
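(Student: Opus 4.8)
The plan is to argue by induction on $k$, transferring the Skorohod-type stopping times $\tau_m$ of $M$ into the Skorohod stopping times $s_m$ of $W$ by means of the DDS identity $M(t) = W(\qv{t})$, which holds for all $t \ge 0$ by Theorem~\ref{DDS1} (or, when $\qv{\infty} < \infty$ with positive probability, by Theorem~\ref{DDS2} after passing to the enlarged space). Precisely, I claim that $s_m(k) = \qv{\tau_m(k)}$ for every $k$ with $\tau_m(k) < \infty$; the case $k = 0$ is trivial since $\tau_m(0) = s_m(0) = 0 = \qv{0}$.

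For the inductive step, assume $s_m(k) = \qv{\tau_m(k)}$ and put $t^* = \tau_m(k+1)$, $s^* = \qv{t^*}$. Since $\qv{}$ is non-decreasing and $t^* > \tau_m(k)$, we have $s^* \ge s_m(k)$; moreover $|W(s^*) - W(s_m(k))| = |M(t^*) - M(\tau_m(k))| = 2^{-m} > 0$ by the DDS identity, so $s^* > s_m(k)$ and $s^*$ is admissible in the infimum defining $s_m(k+1)$, giving $s_m(k+1) \le s^*$. Conversely, if $s_m(k+1) < s^*$, then since $\qv{}$ is continuous and $\qv{\tau_m(k)} = s_m(k) < s_m(k+1) < s^* = \qv{t^*}$, the intermediate value theorem supplies a time $t$ with $\tau_m(k) < t < t^*$ and $\qv{t} = s_m(k+1)$, whence $|M(t) - M(\tau_m(k))| = |W(s_m(k+1)) - W(s_m(k))| = 2^{-m}$, contradicting the minimality of $\tau_m(k+1) = t^*$. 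Hence $s_m(k+1) = \qv{\tau_m(k+1)}$, which completes the induction. (One uses here that $s_m(k+1) > s_m(k)$ strictly, which holds because $W$ is a genuine Brownian motion.)

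The asserted identities are then immediate: $B_m(k2^{-2m}) = W(s_m(k))$ is the definition~(\ref{eq:Skor2}), and $W(s_m(k)) = W(\qv{\tau_m(k)}) = M(\tau_m(k))$ by the DDS identity; the bracketed inequality $\tau_m(k) \le T_{s_m(k)} = T_{\qv{\tau_m(k)}}$ is exactly the one-sided relation $T_{\qv{t}} \ge t$ recorded after~(\ref{eq:T}), and it is strict precisely when $\qv{}$ --- equivalently $M$ --- has a level stretch issuing from $\tau_m(k)$. Finally, for the range of $k$: on $\{\qv{\infty} = \infty\}$ every $\tau_m(k)$ is finite and $s_m(k) = \qv{\tau_m(k)} < \infty = \qv{\infty}$, while on $\{\qv{\infty} < \infty\}$ the limit $M(\infty)$ exists, so $M$ crosses the level $2^{-m}$ only finitely often and $\tau_m(k) < \infty$ exactly for $k \le k_0(\omega)$; a further application of the intermediate value theorem to $\qv{}$ then shows that, up to a null set, $\{k : \tau_m(k) < \infty\}$ coincides with $\{k : s_m(k) \le \qv{\infty}\}$.

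I expect the principal obstacle to be the bookkeeping around the constant stretches of the time-changes and the seam between the two DDS regimes: one must make sure the intermediate-value argument really produces a time strictly inside $(\tau_m(k), \tau_m(k+1))$ (this is where $s_m(k+1) > s_m(k)$ is needed), that the identity $M(t) = W(\qv{t})$ is invoked in the enlarged-space form valid when $\qv{\infty} < \infty$, and that the last index $k_0$ at which $\tau_m$ is finite is matched correctly with the cutoff $s_m(k) \le \qv{\infty}$. None of these points is deep, but each is a place where a careless argument could slip.
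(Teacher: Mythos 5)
Your proof is correct and follows essentially the same route as the paper's: the easy inequality $s_m(k)\le \qv{\tau_m(k)}$ from the DDS identity, then a contradiction from strict inequality, carried out by induction on $k$. The only cosmetic difference is that you produce the contradicting time by the intermediate value theorem applied to the continuous $\qv{}$, whereas the paper pulls $s_m(k)$ back via the quasi-inverse $T$ and (\ref{eq:tchange2}); your remarks on the range of $k$ and the bracketed inequality are at least as careful as the paper's treatment.
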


\begin{proof}
By Theorems \ref{DDS1} and \ref{DDS2} it follows that
$W(\qv{\tau_m(k)}) = M(\tau_m(k))$. This implies that $s_m(k) \le
\qv{\tau_m(k)}$. Then consider first the case $k=1$. If $s_m(1) <
\qv{\tau_m(1)}$ held, then $T_{s_m(1)} < \tau_m(1)$ would follow
by (\ref{eq:tchange2}), and this would lead to a contradiction
because $M(T_{s_m(1)}) = W(s_m(1)) = \pm 2^{-m}$. For values $k >
1$, induction with a similar argument can show the statement of
the lemma.
\end{proof}

Below $B_m$ will always denote the sequence of shrunken RWs
defined by Lemma \ref{le:SkorM}.

Our next objective is to show that the quadratic variation of $M$ can
be obtained as an almost sure limit of a point process related to the
above stopping times that we will call \emph{a discrete quadratic
variation process}:
\begin{eqnarray}
N_m(t) &=& 2^{-2m} \# \{r: r>0, \tau_m(r) \le t \}  \label{eq:N_m} \\
&=& 2^{-2m} \# \{r: r>0, s_m(r) \le \qv{t} \} \qquad (t\ge 0) .
\nonumber
\end{eqnarray}
Clearly, the paths of $N_m$ are non-decreasing pure jump functions,
the jumping times being exactly the stopping times
$\tau_m(k)$. Moreover, $N_m\left(\tau_m(k)\right) = k2^{-2m}$ and the
magnitudes of jumps are constant $2^{-2m}$ when $m$ is fixed.

\begin{lem} \label{le:qvar}
Let M be a continuous local martingale vanishing at 0, let $\qv{}$ be
the quadratic variation, $T$ be its quasi-inverse (\ref{eq:T}), and
$N_m$ be the discrete quadratic variation defined in (\ref{eq:N_m}).
Fix $K > 0$ and take a sequence $a_m = O(m^{-2-\epsilon} 2^{2m}) K$ with
some $\epsilon > 0$, where $a_m \ge K \lor 1$ for any $m \ge 1$
($x \lor y = \max(x, y)$, $x \wedge y = \min(x, y)$).

(a) Then for any $C\ge 3/2$ and $m\ge m_4(C)$ we have
\begin{eqnarray*}
\PP \lb \sup_{0 \le t \le K} \left| \qv{t}\wedge
a_m - N_m(t \wedge T_{a_m}) \right| \ge 12(C a_m \log_*a_m)^{\half}
m^{\half} 2^{-m} \rb  \\
\le 3(a_m 2^{2m})^{1-C} .
\end{eqnarray*}

(b) Suppose that the quadratic variation satisfies the following
tail-condition: a sequence $(a_m)$ fulfilling the above assumptions
can be chosen so that
\begin{equation}
\PP \lb \qv{t} > a_m \rb \le D(t) m^{-1-\epsilon} ,
\label{eq:tail}
\end{equation}
where D(t) is some finite valued function of $t \in \RR_+$.
Then for any $C\ge 3/2$ and $m\ge m_4(C)$ it follows that
\begin{eqnarray*}
\PP \lb \sup_{0 \le t \le K} \left| \qv{t} - N_m(t) \right|
\ge 12(C a_m \log_*a_m)^{\half} m^{\half} 2^{-m} \rb \\
\le 3(a_m 2^{2m})^{1-C} + D(K) m^{-1-\epsilon} .
\end{eqnarray*}
\end{lem}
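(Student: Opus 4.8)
The plan is to reduce part~(a) to the Skorohod-embedding estimate (\ref{eq:sm}) for the DDS Wiener process $W$, and to deduce part~(b) from~(a) by removing the truncation at the cost of the tail term.

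For part~(a), the key preliminary identity is $\qv{t}\wedge a_m=\qv{t\wedge T_{a_m}}$, valid whether or not $\qv{\infty}<\infty$: it follows from the continuity of $\qv{}$ (so $\qv{T_{a_m}}=a_m$ when $T_{a_m}<\infty$) together with (\ref{eq:tchange1})--(\ref{eq:tchange2}). Combined with the second line of (\ref{eq:N_m}) this gives $N_m(t\wedge T_{a_m})=2^{-2m}\#\{r>0:s_m(r)\le u\}$, where $u:=\qv{t}\wedge a_m\in[0,a_m]$ and the $s_m(r)$ are the honest Skorohod times of the standard Brownian motion $W$ from Lemma~\ref{le:SkorM} (hence finite, strictly increasing to $\infty$, with $s_m(r)=\qv{\tau_m(r)}$). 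Writing $k=k(\omega,t)$ for this count, so that $s_m(k)\le u<s_m(k+1)$, we have $N_m(t\wedge T_{a_m})=k2^{-2m}$. Now apply (\ref{eq:sm}) with $a_m$ in place of $K$ (legitimate since $a_m\ge K\lor1$, so $K_*$ becomes $a_m$; the threshold and the constants in (\ref{eq:sm}) do not depend on the interval length, and $a_m2^{2m}\ge2^{2m}$ is large for $m\ge m_4(C)$): outside an event of probability at most $3(a_m2^{2m})^{1-C}$ one has $\sup_{0\le k2^{-2m}\le a_m}|s_m(k)-k2^{-2m}|\le\delta_m$, $\delta_m:=6(Ca_m\log_* a_m)^{\half}m^{\half}2^{-m}$. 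On that event, $s_m(k)\le u$ gives $k2^{-2m}\le u+\delta_m$, while $u<s_m(k+1)$ gives $k2^{-2m}>u-2^{-2m}-\delta_m$ when $(k+1)2^{-2m}\le a_m$ (and in the borderline case $(k+1)2^{-2m}>a_m$ one uses $u\le a_m<(k+1)2^{-2m}$ directly, which only sharpens the bound); hence $|u-k2^{-2m}|\le\delta_m+2^{-2m}\le2\delta_m=12(Ca_m\log_* a_m)^{\half}m^{\half}2^{-m}$, using $2^{-2m}\le\delta_m$. Taking the supremum over $t\in[0,K]$ and recalling $u=\qv{t\wedge T_{a_m}}=\qv{t}\wedge a_m$ and $k2^{-2m}=N_m(t\wedge T_{a_m})$ yields~(a).

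For part~(b), since $\qv{}$ is non-decreasing, on $\{\qv{K}\le a_m\}$ we have $\qv{t}\le a_m$ for all $t\le K$, hence $K\le T_{a_m}$ by the definition of $T$, so $t\wedge T_{a_m}=t$ and $\qv{t}\wedge a_m=\qv{t}$ for every $t\in[0,K]$. Thus on $\{\qv{K}\le a_m\}$ the two suprema $\sup_{0\le t\le K}|\qv{t}-N_m(t)|$ and $\sup_{0\le t\le K}|\qv{t}\wedge a_m-N_m(t\wedge T_{a_m})|$ agree, and a union bound with part~(a) and the tail hypothesis (\ref{eq:tail}) at $t=K$ gives
\begin{eqnarray*}
\lefteqn{\PP \lb \sup_{0\le t\le K} \left| \qv{t} - N_m(t) \right|
\ge 12(Ca_m\log_* a_m)^{\half} m^{\half} 2^{-m} \rb} \\
&\le& 3(a_m2^{2m})^{1-C} + \PP \lb \qv{K} > a_m \rb
\le 3(a_m2^{2m})^{1-C} + D(K)m^{-1-\epsilon} ,
\end{eqnarray*}
which is~(b).

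The Wiener-side input is already packaged in (\ref{eq:sm}), so the routine part is mild. The delicate points are: first, checking that the identification $N_m(t)=2^{-2m}\#\{r:s_m(r)\le\qv{t}\}$ and the sandwich $s_m(k)\le u<s_m(k+1)$ survive the pathologies of $\qv{}$ --- constant level-stretches (on which $M$, hence the vertices of $B_m$, are frozen) and, on $\{\qv{\infty}<\infty\}$, the facts that $T_{a_m}$ and $\tau_m(k+1)$ may be infinite although the DDS time $s_m(k+1)$ is finite --- all of which are handled, up to a null set, by Lemma~\ref{le:SkorM} and the DDS representation of Theorems~\ref{DDS1}--\ref{DDS2}; and second, the bookkeeping needed to feed the $m$-dependent length $a_m$ into the Wiener estimates, where the growth restriction $a_m=O(m^{-2-\epsilon}2^{2m})K$ additionally secures $\delta_m\to0$. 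I expect this second point --- in particular arranging that $m_4(C)$ can be chosen independent of $K$ and of the sequence $(a_m)$ --- to be the main thing to get right.
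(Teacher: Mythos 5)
Your route is the paper's own: reduce (a) to the Skorohod estimate (\ref{eq:sm}) applied with the $m$-dependent horizon $a_m$, sandwich the count $2^{2m}N_m(t\wedge T_{a_m})$ between the indices whose stopping times straddle $u=\qv{t}\wedge a_m$, and obtain (b) by a union bound with the tail condition (your treatment of (b) is correct and essentially identical to the paper's). However, there is a genuine gap in the upper half of your sandwich in (a). You bound the random count $k$ by invoking $|s_m(k)-k2^{-2m}|\le\delta_m$ at the index $k$ itself, while your good event only controls indices $r$ with $r2^{-2m}\le a_m$. Nothing on that event prevents $k2^{-2m}>a_m$: when $u$ is close to $a_m$, stopping times $s_m(r)$ with $r2^{-2m}>a_m$ may still fall below $u$, since the increments $s_m(r)-s_m(r-1)$ beyond index $\lfloor a_m2^{2m}\rfloor$ are completely uncontrolled by your event; the count can therefore overshoot the controlled range, and the inequality $k2^{-2m}\le u+\delta_m$ is not licensed. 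Your ``borderline case'' remark only concerns the lower bound (the index $k+1$), not this overshoot.

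The paper closes exactly this hole by defining the good event $A_{a,m}$ over the doubled index range $r2^{-2m}\le 2a_m$ (possible because $a_m+h_{a,m}+2^{-2m}\le 2a_m$ for $m\ge m_4(C)$) and bounding the count from above by exhibiting the explicit index $r=\lfloor(u+h_{a,m})2^{2m}\rfloor+1\le 2a_m2^{2m}$, whose stopping time exceeds $u$ on $A_{a,m}$. This enlargement is also what produces the stated constants: Lemma \ref{le:equid} on $[0,2a_m]$ gives the threshold $6\bigl(2Ca_m\log_*(2a_m)\bigr)^{\half}m^{\half}2^{-m}\le 11.1\,(Ca_m\log_*a_m)^{\half}m^{\half}2^{-m}$ and the probability $4(2a_m2^{2m})^{1-C}\le 3(a_m2^{2m})^{1-C}$ for $C\ge 3/2$. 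As written, your application of Lemma \ref{le:equid} with horizon $a_m$ would in any case yield $4(a_m2^{2m})^{1-C}$, not the claimed $3(a_m2^{2m})^{1-C}$; so both the missing control of the overshoot and the constant bookkeeping require the paper's enlarged-range device (or an equivalent patch).
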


\begin{proof}
The basic idea of the proof is that the Skorohod stopping times of a
Wiener process are asymptotically uniformly distributed as shown by
(\ref{eq:sm}), while the case of a continuous local martingale can be
reduced to the former by the DDS representation, cf. Lemma \ref{le:SkorM}.

Introduce the abbreviation $h_{a,m} = 11.1 \left(C a_m \log_*a_m
\right)^{\half} m^{\half} 2^{-m}$. Then $h_{a,m}$ $ = O(m^{-\epsilon})
\to 0$ as $m \to \infty$. We
need a truncation here using the sequence $a_m$, since the
quadratic variation $\qv{t}$ is not a bounded
random variable in general. By (\ref{eq:tchange2}) and (\ref{eq:N_m}),
\begin{eqnarray*}
N_m(t \wedge T_{a_m}) &=& 2^{-2m} \# \{r: r>0, \tau_m(r) \le t
\wedge T_{a_m} \} \\
&=& 2^{-2m} \# \{r: r>0, s_m(r) \le \qv{t} \wedge a_m \} .
\end{eqnarray*}
On the event
\[ A_{a,m} = \left\{ \sup_{0 \le r2^{-2m} \le 2a_m} |s_m(r)
- r2^{-2m}| \le h_{a,m} \right\}, \] if $r = \lfloor (\qv{t}
\wedge a_m + h_{a,m})2^{2m} \rfloor +1$, then $s_m(r) > \qv{t}
\wedge a_m$, so $s_m(r)$ is not included in $N_m(t\wedge
T_{a_m})$. Observe here that $a_m+h_{a,m}+2^{-2m} \le 2a_m$ if $m$
is large enough, $m \ge m_4(C)$, where we also suppose that
$m_4(C) \ge m_3(C)$ and $m_3(C)$ is defined by Lemma
\ref{le:equid}. This explains why the sup is taken for $r2^{-2m}
\le 2 a_m$ in the definition of $A_{a,m}$. Similarly on $A_{a,m}$,
if $r = \lfloor (\qv{t} \wedge a_m - h_{a,m})2^{2m} \rfloor$, then
$s_m(r) \le \qv{t} \wedge a_m$, so $s_m(r)$ must be included in
$N_m(t\wedge T_{a_m})$. Hence
\begin{equation}
\qv{t} \wedge a_m  - h_{a,m} - 2^{-2m} \le N_m(t \wedge T_{a_m})
\le \qv{t} \wedge a_m + h_{a,m} + 2^{-2m} ,
\label{eq:qvardiff}
\end{equation}
for any $t \in [0, K]$ on $A_{a,m}$.

Now $6 \left(C 2a_m \log_*(2a_m) \right)^{\half} m^{\half} 2^{-m}
\le 11.1 \left(C a_m \log_*a_m \right)^{\half} m^{\half} 2^{-m}
= h_{a,m}$, since $\log_*(2 a_m) \le (1 + \log 2) \log_* a_m$. Hence
it follows by Lemma \ref{le:equid} that
\[
\PP \lb A_{a,m}^c \rb \le 4(2a_m 2^{2m})^{1-C}
\le 3(a_m 2^{2m})^{1-C},
\]
when $C\ge 3/2$ and $m \ge m_4(C)$. Noticing that $0.9 \left(C a_m
\log_*a_m \right)^{\half} m^{\half} 2^{-m}$ $> 2^{-2m}$ for any $m
\ge 1$, this and (\ref{eq:qvardiff}) prove (a).

Part (b) follows from (a), the inequality
\begin{eqnarray}
| \qv{t} - N_m(t) | &\le& | \qv{t} - \qv{t} \wedge a_m| \nonumber \\
&+& | \qv{t} \wedge a_m - N_m(t \wedge T_{a_m}) | \nonumber \\
&+& | N_m(t \wedge T_{a_m}) - N_m(t) | , \label{eq:triang}
\end{eqnarray}
and from the following simple relationships between events:
\[ \{ \qv{t} \wedge a_m \ne \qv{t} \} = \{ \qv{t} > a_m \} \]
and
\[
\{ N_m(t \wedge T_{a_m}) \ne N_m(t) \} = \{ t > T_{a_m} \}
= \{ \qv{t} > a_m \} ,
\]
cf. (\ref{eq:tchange2}).
\end{proof}

\begin{rem} \label{re:rem}
When the quadratic variation $\qv{t}$ is almost
surely bounded above by a finite valued function $g(t)\ge \max\{t,e\}$ for each $t >
0$, statements (a) and (b) of Lemma \ref{le:approx} simplify as
\begin{eqnarray*}
\PP \lb \sup_{0 \le t \le K} \left| \qv{t} - N_m(t) \right|
\ge 12(C g(K) \log g(K))^{\half} m^{\half} 2^{-m} \rb \\
\le 3(g(K) 2^{2m})^{1-C}
\end{eqnarray*}
for any $K>0$, $C\ge 3/2$ and $m\ge m_4(C)$.
\end{rem}

The statement of the next theorem corresponds to the main result
in Karand\-ikar \cite{Kar83}, though the method applied is
different and here we give a rate of convergence as well.

\begin{thm} \label{th:qvar}
Using the same notations as in Lemma \ref{le:qvar} and taking a sequence $(c_m)$ increasing to $\infty$ arbitrary slowly, we have
\[
\sup_{0\le t\le K} \left|\qv{t}-N_m(t)\right| < c_m m^{\half}
2^{-m} \qquad \mbox{a.s.} \qquad (m \to \infty) .
\]
Under the condition of Remark \ref{re:rem}, we also have
\[
\sup_{0\le t\le K} \left|\qv{t}-N_m(t)\right| < g(K)^{\half} (\log g(K))^{\half}
\qquad \mbox{a.s.} \qquad (K \to \infty)
\]
for any $m$ large enough, $m \ge m_4(3)$.
\end{thm}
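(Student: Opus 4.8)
The plan is to derive both almost-sure statements from the tail bounds in Lemma~\ref{le:qvar} by a Borel--Cantelli argument, exactly as Theorem~\ref{th:Wiener} was deduced from Theorem~\ref{le:refin}. For the first assertion, fix $K>0$ and a sequence $c_m \uparrow \infty$. Since $c_m \to \infty$, one can choose the auxiliary sequence $(a_m)$ permitted in Lemma~\ref{le:qvar} — say $a_m = (K\vee 1)\,m^{-2-\epsilon}2^{2m}$ with a fixed $\epsilon>0$ — so that the deterministic bound $12(Ca_m\log_* a_m)^{\half} m^{\half} 2^{-m}$ in part (a) is eventually dominated by $c_m m^{\half} 2^{-m}$; indeed $12(Ca_m\log_* a_m)^{\half}2^{-m} = O(m^{-\epsilon/2}\sqrt{\log m}) \to 0$, whereas $c_m \to \infty$, so for all $m \ge m_5$ the event in part (a) is contained in $\{\sup_{0\le t\le K}|\qv{t}-N_m(t\wedge T_{a_m})| \ge c_m m^{\half}2^{-m}\}$. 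Taking $C=3$ (say) in part (a), the probabilities $3(a_m 2^{2m})^{1-C}$ are summable in $m$, so by Borel--Cantelli, almost surely $\sup_{0\le t\le K}|\qv{t}\wedge a_m - N_m(t\wedge T_{a_m})| < c_m m^{\half}2^{-m}$ for all large $m$.

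The remaining point for the first assertion is to remove the truncation: on the event $\{\qv{K} \le a_m\}$ one has $\qv{t}\wedge a_m = \qv{t}$ and $T_{a_m} \ge K \ge t$, hence $N_m(t\wedge T_{a_m}) = N_m(t)$, for every $t \in [0,K]$, so the truncated difference equals the untruncated one. Since $\qv{K} < \infty$ almost surely and $a_m \to \infty$, for almost every $\omega$ there is an $m_6(\omega)$ with $\qv{K} \le a_m$ for all $m \ge m_6(\omega)$; combining this with the Borel--Cantelli conclusion gives $\sup_{0\le t\le K}|\qv{t}-N_m(t)| < c_m m^{\half}2^{-m}$ a.s. for all large $m$, which is the claim.

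For the second assertion we work under the hypothesis of Remark~\ref{re:rem}, so $\qv{t} \le g(t)$ a.s. with $g(t) \ge \max\{t,e\}$, and we use the simplified bound stated there: with $C=3$,
\[
\PP\lb \sup_{0\le t\le K}|\qv{t}-N_m(t)| \ge 12(3 g(K)\log g(K))^{\half} m^{\half}2^{-m} \rb \le 3(g(K)2^{2m})^{-2}.
\]
Here we want to let $K \to \infty$ along, say, integer values (or dyadic values), with $m \ge m_4(3)$ held fixed but large. The key observation is that $12(3g(K)\log g(K))^{\half} m^{\half}2^{-m} = O(1)\,g(K)^{\half}(\log g(K))^{\half}$ with the implied constant depending only on $m$, so for $m$ fixed and large enough the event on the left contains $\{\sup_{0\le t\le K}|\qv{t}-N_m(t)| \ge g(K)^{\half}(\log g(K))^{\half}\}$ once $K$ exceeds some $K_0(m)$; meanwhile $\sum_{K} 3(g(K)2^{2m})^{-2} \le 3\cdot 2^{-4m}\sum_K K^{-2} < \infty$ since $g(K) \ge K$. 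Borel--Cantelli (now in the variable $K$) then yields $\sup_{0\le t\le K}|\qv{t}-N_m(t)| < g(K)^{\half}(\log g(K))^{\half}$ a.s. for all large $K$, for each fixed large $m$.

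The main obstacle is bookkeeping rather than anything deep: one must verify that the free parameter $(a_m)$ in Lemma~\ref{le:qvar} can genuinely be chosen to absorb an \emph{arbitrarily slowly} growing $c_m$ while still satisfying $a_m = O(m^{-2-\epsilon}2^{2m})K$ and $a_m \ge K\vee 1$, and one must be careful that in the second statement the ``constant'' $O(1)$ hides an $m$-dependence, so the phrase ``for any $m$ large enough'' is doing real work — the bound is not uniform in $m$ as $K\to\infty$. Neither difficulty is serious; both reduce to the elementary asymptotic comparisons indicated above together with summability of $\sum 2^{-2m}$ and $\sum K^{-2}$.
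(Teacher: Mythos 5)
Your overall plan (Lemma \ref{le:qvar}(a) plus Borel--Cantelli, then removing the truncation on the almost sure event $\lb \qv{K} \le a_m \rb$ for all large $m$) is the paper's plan, and your second statement is handled correctly (Remark \ref{re:rem} with $C=3$, summing $3(g(K)2^{2m})^{-2} \le 3\cdot 2^{-4m}K^{-2}$ over integer $K$). The gap is in the first statement, in the choice of $a_m$ and the domination claim built on it. With $a_m=(K\vee 1)m^{-2-\epsilon}2^{2m}$ one has $\log_* a_m=\Theta(m)$, so the threshold of Lemma \ref{le:qvar}(a) is $12(Ca_m\log_* a_m)^{\half}m^{\half}2^{-m}=\Theta\bigl(K_*^{\half}m^{-\epsilon/2}\bigr)$, which decays only polynomially in $m$, while the target $c_m m^{\half}2^{-m}$ decays geometrically (since $c_m$ grows arbitrarily slowly). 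Dividing by $m^{\half}2^{-m}$, what you actually need is $12(Ca_m\log_* a_m)^{\half}\le c_m$; your verification instead bounds $12(Ca_m\log_* a_m)^{\half}2^{-m}$, i.e. it carries a spurious factor $2^{-m}$, and with your $a_m$ the correct quantity is of order $2^{m}m^{-\half-\epsilon/2}\to\infty$. Consequently the event $\lb \sup_{0\le t\le K}|\qv{t}\wedge a_m-N_m(t\wedge T_{a_m})|\ge c_m m^{\half}2^{-m}\rb$ is \emph{not} contained in the event of Lemma \ref{le:qvar}(a), and your Borel--Cantelli step does not yield the stated rate. (A smaller slip of the same kind: the containment you state is reversed; the lemma's threshold must be the smaller of the two for the argument to run.)

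The repair is to choose the truncation level the other way around, which is what the paper does: take $a_m$ tending to infinity slowly, e.g. $a_m=c_m$ (or $c_m\vee K\vee 1$), with $C=3/2$. Then $12(Ca_m\log_* a_m)^{\half}\le c_m$ for all large $m$ because $144C\log_* c_m\le c_m$ eventually, the growth condition $a_m=O(m^{-2-\epsilon}2^{2m})K$ is trivially met, and the probabilities $3(a_m2^{2m})^{1-C}\le 3\cdot 2^{-m}$ remain summable. Your truncation-removal step then goes through verbatim, since $a_m\to\infty$ still forces $\qv{K}\le a_m$, hence $T_{a_m}\ge K$, for all $m$ large depending on $\omega$ (this is exactly how the paper kills the first and third terms of (\ref{eq:triang})). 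With that single change of $a_m$ your argument coincides with the paper's proof.
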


\begin{proof}
To show the first statement take e.g. $C=3/2$ and $a_m = c_m$
in Lemma \ref{le:qvar} (a). Consider the inequality (\ref{eq:triang}).
Since $\qv{K}$ is finite-valued and $c_m \to \infty$, if $m$ is large
enough, depending on $\omega$, $\qv{K} < a_m$ holds and then $t <
T_{a_m}$ holds as well by (\ref{eq:tchange1}). These remarks show
that the first and the third terms on the right hand side of
inequality (\ref{eq:triang}) are zero if $m$ is large enough.
Further, statement of Lemma \ref{le:qvar} (a) can be applied to the
second term.  This, with the Borel--Cantelli lemma, proves the
first statement of the theorem.

The second statement of theorem follows similarly from Lemma \ref{le:qvar}
(a) by the Borel--Cantelli lemma, taking $C=3$ and $a_m = g(K)$.
\end{proof}

Now we are ready to discuss the strong approximation of continuous
local martingales by time-changed random walks.

\begin{lem} \label{le:approx}
Let M be a continuous local martingale vanishing at 0, let $\qv{}$ be
the quadratic variation and $T$ be its quasi-inverse (\ref{eq:T}).
Denote by $B_m$ the sequence of shrunken RWs embedded into $M$ by
Lemma \ref{le:SkorM}. Fix $K > 0$ and take a sequence $a_m
= O(m^{-7-\epsilon} 2^{2m}) K$ with some $\epsilon > 0$,
where $a_m \ge K \lor 1$ for any $m \ge 1$.

(a) Then for any $C\ge 3/2$ and $m\ge m_3(C)$ we have
\begin{eqnarray*}
\PP \lb \sup_{0\le t\le K}|M(t \wedge T_{a_m}) -
B_m(\qv{t} \wedge a_m)| \ge  a_m^{\quart} (\log_*a_m)^{\frac34} m
2^{-\frac{m}{2}} \rb \\
\le 10(a_m 2^{2m})^{1-C} .
\end{eqnarray*}

(b) Under the tail-condition (\ref{eq:tail}),
for any $C\ge 3/2$ and $m\ge m_3(C)$ it follows that
\begin{eqnarray*}
\PP \lb \sup_{0 \le t \le K} \left| M(t) - B_m(\qv{t}) \right|
\ge a_m^{\quart} (\log_*a_m)^{\frac34} m 2^{-\frac{m}{2}}  \rb \\
\le 10(a_m 2^{2m})^{1-C} + D(K) m^{-1-\epsilon} .
\end{eqnarray*}
\end{lem}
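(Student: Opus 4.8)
The plan is to reduce the approximation of $M$ by $B_m$ to the already-established approximation of the DDS Wiener process $W$ by the Skorohod-embedded RWs. Recall from Lemma~\ref{le:SkorM} that $B_m(\qv{t}) = W(s_m(\tau_m\text{-index}))$ is built from $W$ via the stopping times $s_m(k) = \qv{\tau_m(k)}$, and that the same $B_m$ coincides (off a null set, on the good event $A_m$ of Lemma~\ref{le:equid}) with the shrunken walks of the RW construction applied to $W$. Since $M(t) = W(\qv{t})$ by Theorems~\ref{DDS1} and~\ref{DDS2}, we have the exact identity $M(t \wedge T_{a_m}) = W(\qv{t} \wedge a_m)$ using $\qv{T_{a_m}} = a_m$ together with (\ref{eq:tchange2}). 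Hence
\[
\sup_{0\le t\le K}\bigl|M(t\wedge T_{a_m}) - B_m(\qv{t}\wedge a_m)\bigr|
= \sup_{0\le t\le K}\bigl|W(\qv{t}\wedge a_m) - B_m(\qv{t}\wedge a_m)\bigr|
\le \sup_{0\le s\le a_m}\bigl|W(s) - B_m(s)\bigr|,
\]
the last bound because $\qv{}$ is continuous and non-decreasing with $\qv{0}=0$, so $\{\qv{t}\wedge a_m : 0\le t\le K\}\subseteq[0,a_m]$. This is the key observation: the time-change is squeezed away, and everything rests on the global-in-$s$ bound for $|W-B_m|$ on $[0,a_m]$.

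For part~(a), I would then invoke Theorem~\ref{th:Wienerm} with $K$ there replaced by $a_m$. That theorem gives, for $C\ge 3/2$ and $m\ge m_3(C)$,
\[
\PP\lb \sup_{0\le s\le a_m}|W(s)-B_m(s)| \ge (a_m)_*^{\quart}(\log_*a_m)^{\frac34}\, m\, 2^{-\frac m2}\rb \le 10\,(a_m 2^{2m})^{1-C},
\]
and since $a_m \ge 1$ we have $(a_m)_* = a_m$, which is exactly the claimed inequality. One subtlety to check: Theorem~\ref{th:Wienerm} is stated for a fixed interval $[0,K]$, whereas here the interval endpoint $a_m$ grows with $m$; but the inequality there holds for every $K>0$ simultaneously with the stated $m$-threshold $m_3(C)$ depending only on $C$, so substituting $K\mapsto a_m$ is legitimate as long as $m\ge m_3(C)$. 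This is where the hypothesis $a_m \ge K\vee 1$ and the polynomial growth rate $a_m = O(m^{-7-\epsilon}2^{2m})K$ matter — the former to use $(a_m)_*=a_m$, and one should confirm the error term $a_m^{\quart}(\log_* a_m)^{3/4} m\, 2^{-m/2}$ is genuinely small (it is $O(m^{-7/4-\epsilon/4}(\log m)^{3/4})\to 0$), consistent with the exponent $7$ in the rate being chosen precisely so that this quantity vanishes.

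For part~(b), I would run the same triangle-inequality decomposition used in Lemma~\ref{le:qvar}(b): write
\[
|M(t) - B_m(\qv{t})| \le |M(t) - M(t\wedge T_{a_m})| + |M(t\wedge T_{a_m}) - B_m(\qv{t}\wedge a_m)| + |B_m(\qv{t}\wedge a_m) - B_m(\qv{t})|,
\]
and observe that on the event $\{\qv{K}\le a_m\}$ one has $t < T_{a_m}$ for all $t\le K$ by (\ref{eq:tchange1}), so the first and third terms vanish identically, leaving only the middle term already controlled by part~(a). The complementary event $\{\qv{K} > a_m\}$ has probability at most $D(K)m^{-1-\epsilon}$ by the tail-condition (\ref{eq:tail}), which accounts for the extra additive term in the bound. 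The main obstacle — really the only place demanding care — is the bookkeeping around substituting the $m$-dependent $a_m$ into Theorem~\ref{th:Wienerm}: one must be sure the threshold $m_3(C)$ does not secretly depend on the interval length, and that the linear-interpolation identification of $B_m$ with $\tilde B_m$ underlying that theorem remains valid on the longer interval $[0,a_m]$; both follow from inspecting Lemma~\ref{le:equid}, whose good set $A_m$ already allows the interval parameter to be arbitrary with the universal threshold $m_3(C)$. Everything else is routine substitution and the Borel--Cantelli-free, purely quantitative assembly of the two error contributions.
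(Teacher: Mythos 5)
Your proposal is correct and takes essentially the same approach as the paper: reduce everything to $\sup_{0\le s\le a_m}|W(s)-B_m(s)|$ and apply Theorem \ref{th:Wienerm} with $K$ replaced by $a_m$ (legitimate, as you note, because $m_3(C)$ does not depend on the interval length), then obtain (b) from (a) by the truncation/tail-condition argument exactly as in Lemma \ref{le:qvar}(b). The only, harmless, difference is that you get the required upper bound directly from the identity $M(t\wedge T_{a_m})=W(\qv{t}\wedge a_m)$ and the inclusion $\{\qv{t}\wedge a_m: 0\le t\le K\}\subseteq[0,a_m]$, whereas the paper substitutes $t=T_s$ and appeals to the common intervals of constancy of $M$ and $\qv{t}$ to turn its inequality (\ref{eq:mtchange}) into an equality.
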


\begin{proof}
First, take the DDS Wiener process $W(s)$ obtained from $M(t)$ by the
time-change $T_s$ as described by Theorems \ref{DDS1} and \ref{DDS2}.
Since below we are going to use $W(s)$ and also the time change $T_s$
only for arguments $s \le \qv{\infty}$, we can always assume that
$W(s)=M(T_s)$ and $M(t)=W(\qv{t})$,
irrespective of the fact whether $\qv{\infty} = \infty$ or not.
Second, define the nested sequence of shrunken RWs $B_m(s)$ by
Lemma \ref{le:SkorM}. Then a quasi-inverse time-change $\qv{t}$ is applied
to $B_m(s)$ that gives $B_m(\qv{t})$ which will be the
sequence of time-changed shrunken RWs approximating $M(t)$.

Since $T_s$ may have jumps, we get that
\begin{eqnarray}
\sup_{0\le t \le K} \left| M(t) - B_m(\qv{t}) \right|
&\ge& \sup_{0\le s \le \qv{K}} \left| M(T_s) - B_m(\qv{T_s}) \right|
\nonumber \\
&=& \sup_{0\le s \le \qv{K}} \left| W(s) - B_m(s) \right| .
\label{eq:mtchange}
\end{eqnarray}
Recalling however that the intervals of constancy are the same
for $M(t)$ and for $\qv{t}$ \cite[IV (1.13), p.125] {Yor99},
there is in fact equality in (\ref{eq:mtchange}). To go on, we
need a truncation using the sequence $a_m$, since the
quadratic variation $\qv{t}$ is not a bounded
random variable in general. Then (\ref{eq:mtchange}) (with equality
as explained above) and (\ref{eq:tchange2}) imply
\begin{eqnarray*}
\lefteqn{\sup_{0\le t \le K} \left|M(t\wedge T_{a_m})
- B_m(\qv{t} \wedge a_m) \right|} \\
&=& \sup_{0\le s \le \qv{K}} \left|M(T_s \wedge
T_{a_m}) - B_m(\qv{T_s} \wedge a_m) \right| \\
&=& \sup_{0\le s \le a_m \wedge \qv{K}} \left| W(s) - B_m(s) \right| \\
&\le& \sup_{0\le s \le a_m} \left| W(s) - B_m(s) \right| .
\end{eqnarray*}
Hence by Theorem \ref{th:Wienerm}, with $m \ge m_3(C)$,
\begin{eqnarray*}
\lefteqn{\PP \lb \sup_{0\le t\le K} |M(t \wedge T_{a_m})
- B_m(\qv{t} \wedge a_m)| \ge a_m^{\quart} (\log_*a_m)^{\frac34} m
2^{-\frac{m}{2}} \rb} \\
&\le& \PP \lb \sup_{0\le s \le a_m} \left| W(s) - B_m(s) \right|
\ge a_m^{\quart} (\log_*a_m)^{\frac34} m 2^{-\frac{m}{2}} \rb \\
&\le& 10(a_m2^{2m})^{1-C} .
\end{eqnarray*}
This proves (a).

To show (b) it is enough to consider the inequality
\begin{eqnarray*}
&& \sup_{0\le t\le K} \left| M(t) - B_m(\qv{t}) \right| \\
&\le& \sup_{0\le t\le K} \left| M(t) - M(t \wedge T_{a_m}) \right|
+ \sup_{0\le t\le K} \left| M(t \wedge T_{a_m})
- B_m(\qv{t} \wedge a_m) \right| \\
&& + \sup_{0\le t\le K} \left| B_m(\qv{t} \wedge a_m)
- B_m(\qv{t}) \right| .
\end{eqnarray*}
From this point the proof is similar to the proof of Lemma
\ref{le:qvar} (b).
\end{proof}

Kiefer \cite{Kie69} proved in the Brownian case $M = W$ that using
Skorohod embedding one cannot embed a standardized
RW into $W$ with convergence rate better
than $O(1) n^{-\quart} (\log n)^{\half} (\log \log n)^{\quart}$,
where $n$ is the number of points used in the
approximation. Since the next theorem gives a rate of convergence
$O(1) n^{-\quart} \log n$ (the number of points used is
$n = K 2^{2m}$), this rate is close to the best we can have with a
Skorohod-type embedding. The same remark is valid for Theorem
\ref{th:approxNm} below.

\begin{thm} \label{th:approx}
Applying the same notations as in Lemma \ref{le:approx} and taking a sequence $(c_m)$ increasing to $\infty$ arbitrary slowly, we have
\[
\sup_{0\le t\le K} \left| M(t) - B_m(\qv{t}) \right| < c_m
m 2^{-\frac{m}{2}} \qquad \mbox{a.s.} \qquad (m \to \infty).
\]
Under the condition of Remark \ref{re:rem}, we also have
\[
\sup_{0\le t\le K} \left| M(t) - B_m(\qv{t}) \right| < g(K)^{\frac14}
(\log g(K))^{\frac34} \qquad \mbox{a.s.} \qquad (K \to \infty)
\]
for any $m$ large enough, $m \ge m_3(3)$.
\end{thm}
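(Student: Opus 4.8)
The plan is to derive Theorem~\ref{th:approx} from Lemma~\ref{le:approx} by the same Borel--Cantelli device used to pass from Lemma~\ref{le:qvar} to Theorem~\ref{th:qvar}. For the first (as $m\to\infty$) statement, I would fix $C=3/2$ and choose the truncation sequence $a_m = c_m'$ where $c_m'$ is a sequence increasing to infinity slowly enough that it still satisfies the hypothesis $a_m = O(m^{-7-\epsilon}2^{2m})K$ of Lemma~\ref{le:approx} with some $\epsilon>0$ — this is possible precisely because any $c_m$ growing arbitrarily slowly can be dominated by such a sequence, and without loss of generality we may also take $a_m\ge K\lor 1$. Then $a_m^{1/4}(\log_* a_m)^{3/4} = o(c_m)$, so the error bound $a_m^{1/4}(\log_* a_m)^{3/4} m 2^{-m/2}$ in Lemma~\ref{le:approx}(a) is eventually below $c_m m 2^{-m/2}$.

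The key step is then to argue that the three-term split of $\sup_{0\le t\le K}|M(t)-B_m(\qv{t})|$ from the proof of Lemma~\ref{le:approx}(b) collapses to just the middle term almost surely for $m$ large. Since $\qv{K}$ is a finite-valued random variable and $a_m\to\infty$, for every fixed $\omega$ there is an $m$ beyond which $\qv{K}<a_m$; by \eqref{eq:tchange1} this forces $t<T_{a_m}$ for all $t\in[0,K]$, hence $M(t\wedge T_{a_m})=M(t)$ and $\qv{t}\wedge a_m=\qv{t}$, so the first and third terms vanish. For the surviving middle term I would invoke the probability bound of Lemma~\ref{le:approx}(a) with $a_m=c_m$ and $C=3/2$: the right-hand side $10(c_m 2^{2m})^{1-3/2}=10(c_m 2^{2m})^{-1/2}$ is summable in $m$, so Borel--Cantelli gives that almost surely $\sup_{0\le t\le K}|M(t\wedge T_{a_m})-B_m(\qv{t}\wedge a_m)| < a_m^{1/4}(\log_* a_m)^{3/4}m2^{-m/2} < c_m m 2^{-m/2}$ for all large $m$, which combined with the vanishing of the other two terms yields the first assertion.

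For the second (as $K\to\infty$) statement, I would instead work under the hypothesis of Remark~\ref{re:rem}, so $\qv{t}\le g(t)$ a.s.\ with $g(K)\ge\max\{K,e\}$, and take $C=3$ and the constant-in-$m$ truncation $a_m=g(K)$ in Lemma~\ref{le:approx}(a). Because $\qv{t}\le g(K)=a_m$ for all $t\in[0,K]$ almost surely, the first and third terms of the three-term split are identically zero (no large-$m$ caveat is needed here), leaving only the middle term, for which part~(a) gives probability bound $10(g(K)2^{2m})^{1-3}=10 g(K)^{-2}2^{-4m}$. This is summable in $m$, and the error bound $g(K)^{1/4}(\log_* g(K))^{3/4}m2^{-m/2}$ is $<g(K)^{1/4}(\log g(K))^{3/4}$ once $m$ is large enough (uniformly in $K$, since $m2^{-m/2}\to 0$ and $\log_* g(K)=\log g(K)$ as $g(K)\ge e$), so Borel--Cantelli delivers the claim for all $m\ge m_3(3)$.

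I expect the only genuinely delicate point to be the bookkeeping in the first statement: one must check that a sequence $c_m$ given "increasing to $\infty$ arbitrarily slowly" can legitimately be used both as the truncation sequence $a_m$ (requiring $a_m=O(m^{-7-\epsilon}2^{2m})K$ and $a_m\ge K\lor 1$) and as the target rate, and that the passage $a_m^{1/4}(\log_* a_m)^{3/4}=o(c_m)$ together with the summability of $(c_m 2^{2m})^{-1/2}$ holds simultaneously — but this is routine once one notes that replacing $c_m$ by $\min\{c_m, m\}\lor(K\lor 1)$, say, changes nothing and makes all the estimates transparent. The rest is a direct transcription of the Theorem~\ref{th:qvar} argument with Lemma~\ref{le:approx} in place of Lemma~\ref{le:qvar}.
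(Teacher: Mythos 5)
Your proposal is correct and takes essentially the same route as the paper, whose proof of Theorem \ref{th:approx} is precisely the Theorem \ref{th:qvar} argument transplanted to Lemma \ref{le:approx}: take $C=3/2$, $a_m=c_m$ for the first claim and $C=3$, $a_m=g(K)$ for the second, observe that the first and third terms of the truncation split vanish (a.s.\ for large $m$, resp.\ identically), and finish with Borel--Cantelli applied to the middle term. The only slip is in the second claim, where the Borel--Cantelli summation must run over (integer) $K$ for fixed $m\ge m_3(3)$ --- summability coming from $10(g(K)2^{2m})^{1-3}\le 10\,K^{-2}2^{-4m}$ since $g(K)\ge K$ --- rather than over $m$ as you wrote; this is immediate and does not affect the argument.
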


\begin{proof}
The statements follow from Lemma \ref{le:approx} in a similar way
as Theorem \ref{th:qvar} followed from Lemma \ref{le:qvar}.
\end{proof}

We mention that when $M$ is a continuous local martingale vanishing
at 0 and there is a
deterministic function $f$ on $\RR_+$ such that $\qv{t}=f(t)$ a.s.,
then it follows that $M$ is Gaussian and has independent increments,
see \cite[V (1.14), p.186]{Yor99}.  Conversely, if $M$ is a
continuous Gaussian martingale, then $\qv{t}=f(t)$ a.s., see
\cite[IV (1.35), p.133]{Yor99}.
In this case the ``twist and shrink'' construction of Brownian motion
described in Section 2 can be extended to a construction of $M(t)$
(or a simulation algorithm in practice). Namely, we have
\[
\left| M(t) - \tilde{B}_m\left(f(t)\right) \right|
\le O(1) m 2^{-\frac{m}{2}} \qquad \mbox{a.s.} \qquad (m \to \infty).
\]
Here $\tilde{B}_m(t) = 2^{-m} \tilde{S}_m(t2^{2m})$ ($m \ge 0$)
denotes the nested sequence of the RW construction described in
Section 2.

Combining the previous results one can replace $\qv{t}$ by the
discrete quadratic variation $N_m(t)$ when approximating $M(t)$ by
time-changed shrunken RWs.

\begin{lem} \label{le:approxNm}
Let M be a continuous local martingale vanishing at 0, let $\qv{}$ be
the quadratic variation, $T$ be its quasi-inverse (\ref{eq:T}) and
$N_m$ be the discrete quadratic variation defined by (\ref{eq:N_m}).
Denote by $B_m$ the sequence of shrunken RWs embedded into $M$ by
Lemma \ref{le:SkorM}. Fix $K > 0$ and take a sequence $a_m
= O(m^{-7-\epsilon} 2^{2m}) K$ with some $\epsilon > 0$, where
$a_m \ge K \lor 1$ for any $m \ge 1$.

(a)  Then for any $C\ge 3/2$ and $m\ge m_5(C)$ we have
\begin{eqnarray*}
\PP \lb \sup_{0\le t\le K}|M(t \wedge T_{a_m}) -
B_m(N_m(t \wedge T_{a_m})| \ge 2 a_m^{\quart} (\log_*a_m)^{\frac34} m
2^{-\frac{m}{2}} \rb \\
\le 14(a_m 2^{2m})^{1-C} .
\end{eqnarray*}

(b) Under the tail-condition (\ref{eq:tail}), for any
$C\ge 3/2$ and $m\ge m_5(C)$ it follows that
\begin{eqnarray*}
\PP \lb \sup_{0 \le t \le K} \left| M(t)
- B_m(N_m(t)) \right| \ge 2 a_m^{\quart} (\log_*a_m)^{\frac34} m
2^{-\frac{m}{2}} \rb \\
\le 14(a_m 2^{2m})^{1-C} + D(K) m^{-1-\epsilon} .
\end{eqnarray*}
\end{lem}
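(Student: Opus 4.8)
The plan is to combine Lemma~\ref{le:approx} (which controls $|M(t\wedge T_{a_m}) - B_m(\qv{t}\wedge a_m)|$) with Lemma~\ref{le:qvar}(a) (which controls $|\qv{t}\wedge a_m - N_m(t\wedge T_{a_m})|$) by inserting $B_m(N_m(t\wedge T_{a_m}))$ as an intermediate term. The triangle inequality gives
\[
\sup_{0\le t\le K}\bigl|M(t\wedge T_{a_m}) - B_m(N_m(t\wedge T_{a_m}))\bigr|
\le \sup_{0\le t\le K}\bigl|M(t\wedge T_{a_m}) - B_m(\qv{t}\wedge a_m)\bigr|
+ \sup_{0\le t\le K}\bigl|B_m(\qv{t}\wedge a_m) - B_m(N_m(t\wedge T_{a_m}))\bigr| .
\]
The first term on the right is handled directly by Lemma~\ref{le:approx}(a), contributing the bound $a_m^{\quart}(\log_*a_m)^{\frac34} m 2^{-\frac m2}$ on an event of probability at least $1 - 10(a_m2^{2m})^{1-C}$.

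The second term is the crux: I need to convert the \emph{time}-proximity of $\qv{t}\wedge a_m$ and $N_m(t\wedge T_{a_m})$ (given by Lemma~\ref{le:qvar}(a)) into \emph{spatial} proximity of the values of $B_m$ at those two times. The hard part will be finding a modulus-of-continuity bound for $B_m$ on $[0,2a_m]$ that is uniform and has the right order. The natural route is to write $B_m = (B_m - W) + W$: on the good event from Theorem~\ref{th:Wienerm} the sup-distance $|W(s)-B_m(s)|$ over $[0,a_m]$ (or $[0,2a_m]$, adjusting constants as in the proof of Lemma~\ref{le:qvar}) is at most $a_m^{\quart}(\log_*a_m)^{\frac34} m 2^{-\frac m2}$, so it suffices to bound $|W(\qv{t}\wedge a_m) - W(N_m(t\wedge T_{a_m}))|$ by the Lévy modulus of continuity of $W$, evaluated at the time-increment $\delta_m := 12(Ca_m\log_*a_m)^{\half} m^{\half} 2^{-m}$ from Lemma~\ref{le:qvar}(a). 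Since $\delta_m = O(m^{-\frac72-\frac\epsilon2} a_m^{\half} 2^{-m}\cdot\text{polylog})$ — here the exponent $-7-\epsilon$ in the hypothesis on $a_m$ is exactly what is needed, mirroring its role in Lemma~\ref{le:approx} — the Lévy modulus $\sqrt{2\delta_m\log(1/\delta_m)}$ is of order $a_m^{\quart}(\log_*a_m)^{\frac34} m 2^{-\frac m2}$ up to a constant, which is why the final bound carries the factor $2$ and not a larger one. The probability cost of the Lévy modulus event over $[0,2a_m]$, together with $\PP(A_{a,m}^c)$ from Lemma~\ref{le:qvar}(a) ($\le 3(a_m2^{2m})^{1-C}$) and the Theorem~\ref{th:Wienerm} event ($\le 10(a_m2^{2m})^{1-C}$, but shared with the first term), is absorbed into the constant $14$; the reader should check that the Lévy-modulus tail can be written in the $(a_m2^{2m})^{1-C}$ form for $C\ge 3/2$ and $m\ge m_5(C)$, where $m_5(C)\ge m_4(C)\lor m_3(C)$ is chosen large enough for all the asymptotic comparisons above to hold.

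Part (b) then follows from (a) exactly as in the proof of Lemma~\ref{le:qvar}(b) and Lemma~\ref{le:approx}(b): write
\[
\sup_{0\le t\le K}\bigl|M(t) - B_m(N_m(t))\bigr|
\le \sup_{0\le t\le K}\bigl|M(t) - M(t\wedge T_{a_m})\bigr|
+ \sup_{0\le t\le K}\bigl|M(t\wedge T_{a_m}) - B_m(N_m(t\wedge T_{a_m}))\bigr|
+ \sup_{0\le t\le K}\bigl|B_m(N_m(t\wedge T_{a_m})) - B_m(N_m(t))\bigr| ,
\]
observe that the first and third suprema vanish off the event $\{\qv{K} > a_m\}$ (using $N_m(t\wedge T_{a_m}) = N_m(t)$ there, since $\{N_m(t\wedge T_{a_m})\ne N_m(t)\} = \{\qv{t}>a_m\}$), and bound $\PP(\qv{K}>a_m)\le D(K)m^{-1-\epsilon}$ by the tail-condition~(\ref{eq:tail}). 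Combining with part (a) gives the stated probability $14(a_m2^{2m})^{1-C} + D(K)m^{-1-\epsilon}$. The only genuine subtlety is the Lévy-modulus estimate; everything else is bookkeeping with the triangle inequality and the already-established lemmas.
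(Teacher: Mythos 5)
Your overall skeleton coincides with the paper's: the same triangle inequality inserting $B_m(\qv{t}\wedge a_m)$, the first term handled by Lemma~\ref{le:approx}(a), the window for the time-argument mismatch supplied by Lemma~\ref{le:qvar}(a), and part (b) obtained by the same truncation/tail argument. The genuine divergence is in the second term. The paper never passes through $W$ there: it replaces $\qv{t}\wedge a_m$ by its dyadic floor (cost $2^{-m}$), notes that the increment of $B_m$ between two dyadic times lying within the window $L_{a,m}=13(Ca_m\log_*a_m)^{\half}m^{\half}2^{-m}$ is itself the value of a shrunken RW at a dyadic time, and applies the Hoeffding-type inequality (\ref{eq:ldp2}) with $N=\lfloor a_m2^{2m}\rfloor\lfloor L_{a,m}2^{2m}\rfloor$ and variance bound $L_{a,m}$; this makes the second term of strictly smaller order than the first, so the thresholds add up as $1+1$ and the probabilities as $10+3+1=14$. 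Your route (Lévy modulus of $W$ at scale $\delta_m$) is a legitimate alternative, but the quantitative Lévy-modulus tail with decay of the form $(a_m2^{2m})^{1-C}$ is exactly the crux, and you only assert it (``the reader should check''); it does follow from a standard discretization plus Gaussian tail bound, but it has to be written out, whereas (\ref{eq:ldp2}) does that job in one line in the paper.

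There is also a concrete bookkeeping gap: with your decomposition the stated factor $2$ is not reached. Writing $B_m=(B_m-W)+W$ at \emph{both} time points costs $2\sup|W-B_m|$, and since $N_m(t\wedge T_{a_m})$ may exceed $a_m$ (by about $\delta_m+2^{-2m}$) this sup must be taken on $[0,2a_m]$, where Theorem~\ref{th:Wienerm} gives the threshold $(2a_m)^{\quart}(\log_*(2a_m))^{\frac34}m2^{-\frac m2}\approx 1.77\,a_m^{\quart}(\log_*a_m)^{\frac34}m2^{-\frac m2}$, not $a_m^{\quart}(\log_*a_m)^{\frac34}m2^{-\frac m2}$. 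Adding the first term ($1\times$ from Lemma~\ref{le:approx}(a)) and the Lévy term, your bound is at least $3$--$4$ times $a_m^{\quart}(\log_*a_m)^{\frac34}m2^{-\frac m2}$, so the remark that the Lévy modulus ``is why the final bound carries the factor $2$'' misplaces where the budget is spent. The repair is simple: use the DDS identity $M(t\wedge T_{a_m})=W(\qv{t}\wedge a_m)$ (the equality already exploited around (\ref{eq:mtchange}), via (\ref{eq:tchange2})), so that the whole quantity is bounded by $|W(\qv{t}\wedge a_m)-W(N_m(t\wedge T_{a_m}))|$ plus a \emph{single} $\sup_{[0,2a_m]}|W-B_m|$; the Lévy term is of order $a_m^{\quart}(\log_*a_m)^{\quart}m^{\frac34}2^{-\frac m2}$, hence lower order, and $1.77+o(1)\le 2$ for $m\ge m_5(C)$, while the probabilities ($10+3+$ Lévy tail) stay under $14(a_m2^{2m})^{1-C}$. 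With that rearrangement and a written-out Lévy-modulus tail estimate your argument closes; as it stands, it proves the lemma only with a larger constant in the threshold.
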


\begin{proof}
For proving (a) we use the triangle inequality
\begin{eqnarray}
\lefteqn{\sup_{0\le t\le K} |M(t \wedge T_{a_m}) - B_m(N_m(t
\wedge T_{a_m})|} \nonumber \\
&\le& \sup_{0\le t\le K} |M(t \wedge T_{a_m}) - B_m(\qv{t} \wedge a_m)|
\nonumber \\
&&+ \sup_{0\le t\le K} |B_m(\qv{t} \wedge a_m) - B_m(N_m(t \wedge
T_{a_m})| . \label{eq:triang2}
\end{eqnarray}
Since the first term on the right hand side can be estimated by
Theorem \ref{th:approx}, we have to consider the second term. For
$m \ge 1$ introduce the abbreviation
\[
L_{a,m} = 13 (C a_m \log_* a_m)^{\half} m^{\half} 2^{-m}
\ge 12 (C a_m \log_* a_m)^{\half} m^{\half} 2^{-m} + 2^{-2m} .
\]
By Theorem \ref{th:qvar} (a), with $C \ge 3/2$ and $m \ge m_4(C)$
it follows that
\begin{eqnarray*}
\lefteqn{\sup_{0\le t\le K} |B_m(\qv{t} \wedge a_m)
- B_m(N_m(t \wedge T_{a_m})|} \\
&\le& \sup_{0\le t\le K} \left| B_m \left( \lfloor (\qv{t} \wedge a_m)
2^{2m} \rfloor 2^{-2m} \right) - B_m(N_m(t \wedge T_{a_m}) \right|
+ 2^{-m} \\
&\le& \sup_{0 \le k2^{-2m} \le \qv{K} \wedge a_m} \quad \sup_{|r-k|
2^{-2m} \le L_{a,m}} |B_m(k2^{-2m}) - B_m(r2^{-2m})| + 2^{-m} \\
&\le& \sup_{0 \le k2^{-2m} \le a_m} \quad \sup_{0 \le r2^{-2m} \le
L_{a,m}} |B_m^{(k)}(r2^{-2m})| + 2^{-m} ,
\end{eqnarray*}
except for an event of probability $\le 3(a_m2^{2m})^{1-C}$,
since the difference of a shrunken RW at two dyadic points equals the
value of some shrunken RW $B_m^{(k)}$ at a dyadic point.

Then we can apply estimate (\ref{eq:ldp2}) with some $C'>1$ for the last
expression:
\begin{eqnarray*}
&& \PP \lb \sup_{0 \le k2^{-2m} \le a_m} \sup_{0 \le r2^{-2m}
\le L_{a,m}} |B_m^{(k)}(r2^{-2m})| \right. \\
&& \left. \ge \left(2 C' \log N \:
\sup_{k, r} \Var(B_m^{(k)}(r2^{-2m}))\right)^{\half} \rb
\le 2 N^{1-C'} ,
\end{eqnarray*}
where $N = \lfloor a_m 2^{2m} \rfloor \lfloor L_{a,m} 2^{2m} \rfloor$
and $\sup_{k,r} \Var\left(B_m^{(k)}(r2^{-2m})\right) \le L_{a,m}$.
Choose here $C'$ so that $1-C' = \frac23 (1-C)$. Then a
simple computation shows that $2 N^{1-C'} \le (a_m 2^{2m})^{1-C}$,
also $\log N \le 8 m \log_* C \log_* a_m$, and
\[\left(2 C' \log N \sup_{k,r} \Var(B_m^{(k)}(r2^{-2m}))
\right)^{\half} + 2^{-m} \le a_m^{\quart} (\log_*a_m)^{\frac34} m
2^{-\frac{m}{2}} \]
if $m \ge m_5(C) \ge m_4(C)$. This argument and Theorem
\ref{th:approx}(a) applied to (\ref{eq:triang2}) give (a).

Statement (b) again follows from (a) in a similar way as
in Lemma \ref{le:qvar}.
\end{proof}

\begin{thm} \label{th:approxNm}
Using the same notations as in Lemma \ref{le:approxNm} and taking a sequence $(c_m)$ increasing to $\infty$ arbitrary slowly, we have
\[
\sup_{0\le t\le K} \left| M(t) - B_m(N_m(t)) \right| < c_m
m 2^{-\frac{m}{2}} \qquad \mbox{a.s.} \qquad (m \to \infty).
\]
Under the condition of Remark \ref{re:rem}, we also have
\[
\sup_{0\le t\le K} \left| M(t) - B_m(N_m(t)) \right| <
g(K)^{\frac14} (\log g(K))^{\frac34} \qquad \mbox{a.s.} \qquad (K \to \infty)
\]
for any $m$ large enough, $m \ge m_5(3)$.
\end{thm}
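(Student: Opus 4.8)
The plan is to follow exactly the pattern by which Theorem~\ref{th:approx} was derived from Lemma~\ref{le:approx}, now using Lemma~\ref{le:approxNm} as the quantitative input and the Borel--Cantelli lemma to pass from probability bounds to almost sure statements. For the first assertion, I would take $C = 3/2$ and $a_m = c_m \,(K \vee 1)$ in Lemma~\ref{le:approxNm}(a); since $(c_m)$ increases to $\infty$ arbitrarily slowly, the hypothesis $a_m = O(m^{-7-\epsilon} 2^{2m}) K$ with $a_m \ge K \vee 1$ is satisfied for $m$ large. The key observation, exactly as in the proof of Theorem~\ref{th:qvar}, is that $\qv{K}$ is a.s. finite-valued, so for $m$ large enough (depending on $\omega$) we have $\qv{K} < a_m$, hence $K < T_{a_m}$ by (\ref{eq:tchange1}), hence $t \wedge T_{a_m} = t$ for all $t \in [0,K]$. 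Therefore on that event the truncated quantity $\sup_{0 \le t \le K} |M(t \wedge T_{a_m}) - B_m(N_m(t \wedge T_{a_m}))|$ coincides with $\sup_{0 \le t \le K} |M(t) - B_m(N_m(t))|$.

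Next I would check that the bound $2 a_m^{1/4} (\log_* a_m)^{3/4} m 2^{-m/2}$ from Lemma~\ref{le:approxNm}(a) is dominated by $c_m m 2^{-m/2}$ for $m$ large; this holds because $a_m = c_m (K \vee 1)$ grows so slowly that $a_m^{1/4} (\log_* a_m)^{3/4} = o(c_m)$ — indeed even $a_m^{1/2} = o(c_m^2)$ would suffice, and one can always replace the given $(c_m)$ by a slower-growing sequence at the outset without loss of generality, so this is a routine matching of rates. Finally, the probability bound $14(a_m 2^{2m})^{1-C} = 14(c_m (K\vee 1) 2^{2m})^{-1/2}$ is summable in $m$ (it decays like $2^{-m}$), so Borel--Cantelli gives that, almost surely, for all sufficiently large $m$ the event in Lemma~\ref{le:approxNm}(a) fails; combined with the a.s. eventual identity of the truncated and untruncated suprema noted above, this yields the first displayed inequality.

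For the second assertion I would instead invoke Lemma~\ref{le:approxNm} under the condition of Remark~\ref{re:rem}: when $\qv{t} \le g(t)$ a.s. with $g(t) \ge \max\{t, e\}$, one takes $a_m = g(K)$ (a constant in $m$), $C = 3$, and $m \ge m_5(3)$. With this choice the tail term $D(K) m^{-1-\epsilon}$ is absent — the truncation at $a_m = g(K) \ge \qv{K}$ never activates, so part (a) applies directly and gives $\sup_{0 \le t \le K} |M(t) - B_m(N_m(t))| \le 2 g(K)^{1/4} (\log g(K))^{3/4} m 2^{-m/2}$ outside an event of probability $\le 14(g(K) 2^{2m})^{-2}$. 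Summing over $m$ and applying Borel--Cantelli (now with $K$ fixed and letting $m \to \infty$, then reading the bound as a function of $K$ in the regime $K \to \infty$, exactly as in the second half of Theorem~\ref{th:qvar}), the factor $m 2^{-m/2}$ is $O(1)$ for $m \ge m_5(3)$, so the bound simplifies to the stated $g(K)^{1/4} (\log g(K))^{3/4}$.

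I do not expect a genuine obstacle here: every step is an instance of an argument already carried out twice in the excerpt (for Theorem~\ref{th:qvar} from Lemma~\ref{le:qvar}, and for Theorem~\ref{th:approx} from Lemma~\ref{le:approx}). The only point requiring a little care is the harmless initial reduction to a sufficiently slowly growing $(c_m)$ so that the $a_m^{1/4}(\log_* a_m)^{3/4}$ factor is absorbed; and, for the second statement, noting that with $a_m = g(K)$ constant the hypothesis $a_m = O(m^{-7-\epsilon} 2^{2m}) K$ is trivially met for large $m$. Accordingly the proof can be stated very briefly, simply as: ``The statements follow from Lemma~\ref{le:approxNm} in the same way as Theorem~\ref{th:approx} followed from Lemma~\ref{le:approx}.''
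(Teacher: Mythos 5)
Your proposal is correct and is essentially the paper's own proof, which is literally the one-liner you close with: the statements follow from Lemma \ref{le:approxNm}(a) exactly as Theorem \ref{th:qvar} followed from Lemma \ref{le:qvar} (take $C=3/2$ and $a_m$ of the order $c_m$, note that a.s. $\qv{K}<a_m$ eventually so the truncation by $T_{a_m}$ is inactive, and apply Borel--Cantelli; for the second claim take $C=3$ and $a_m=g(K)$). The one imprecision is in your second claim, where Borel--Cantelli should be applied over integer $K\to\infty$ with $m\ge m_5(3)$ fixed, summing $14\,(g(K)2^{2m})^{-2}\le 14\,K^{-2}$, rather than ``with $K$ fixed and letting $m\to\infty$'' as you phrased it --- but this is the same routine already used after Theorem \ref{th:Wiener} and in Theorem \ref{th:qvar}, so it does not affect the substance.
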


\begin{proof}
The statements follow from Lemma \ref{le:approxNm} in a similar way again as Theorem \ref{th:qvar} followed from Lemma \ref{le:qvar}.
\end{proof}

\section{Independence of the DDS BM and $\qv{}$}

It is important both from theoretical and practical (e.g.
simulation) points of view that the shrunken RW $B_m$ and the
corresponding discrete quadratic variation process $N_m$ be
independent when approximating $M$ as in Theorem
\ref{th:approxNm}. This leads to the question of independence of
the DDS Brownian motion $W$ and quadratic variation $\qv{}$ in the
case of a continuous local martingale $M$. For, by Lemma
\ref{le:SkorM}, $B_m$ depends only on $W$ and, by (\ref{eq:N_m}),
$N_m$ is determined by $\qv{}$ alone.  Conversely, if the
processes $B_m$ and $N_m$ are independent for any $m$ large
enough, then so are $W$ and $\qv{}$ too by Theorems
\ref{th:Wienerm} and \ref{th:qvar}. It will turn out from the next
theorem that the basic notion in this respect is the symmetry of
the increments of $M$ given the past. Thus we will say that a
stochastic process $M(t)$ $(t \ge 0)$ is {\em symmetrically
evolving} (or has symmetric increments given the past) if for any
positive integer $n$, reals $0 \le s < t_1 < \dots <t_n$ and Borel
sets of the line $U_1, \dots U_n$ we have
\begin{equation}
\PP \lb \Gamma \mid \mathcal{F}_s^0 \rb
= \PP \lb  \Gamma^- \mid \mathcal{F}_s^0 \rb  \label{eq:indep1} ,
\end{equation}
where $\Gamma = \lb M(t_1) - M(s) \in U_1, \dots , M(t_n) - M(s)
\in U_n \rb$, $\Gamma^-$ is the same, but each $U_j$  replaced by
$-U_j$, and $\mathcal{F}_s^0 = \sigma(M(u), 0 \le u \le s)$ is the
filtration generated by the past of $M$.  If $M(t)$ has finite
expectation for any $t \ge 0$, then this condition expresses a
very strong martingale property.

Condition (\ref{eq:indep1}) is clearly equivalent to
the following one: for arbitrary positive integers $n$, $j$, reals
$0 \le s_j < \dots < s_1 \le s < t_1 <
\dots t_n$ and Borel-sets $V_1, \dots , V_j, U_1, \dots , U_n$ one has
\begin{equation}
\PP \lb \Gamma \cap \Lambda \rb = \PP \lb \Gamma^- \cap  \Lambda \rb ,
\label{eq:indep2}
\end{equation}
where $\Gamma$ and $\Gamma^-$ are defined above and $\Lambda = \lb
M(s_1) \in V_1, \dots , M(s_j) \in V_j \rb$.

Our Theorem \ref{th:indep} below is basically a reformulation of
Ocone's Theorem A of \cite{Oco93}. There it is shown that a
continuous local martingale $M$ is conditionally (w.r.t. to the
sigma algebra generated by $\qv{}$) Gaussian martingale if and
only if it is  $J$-invariant. Here $J$-invariance means that $M$
and $\int_0^t \alpha \di M$ have the same law for any predictable
process $\alpha$ with range in $\lb -1, 1 \rb$. In fact, it is
proved there too that $J$-invariance is equivalent to
$H$-invariance which means that it is enough to consider
deterministic integrands of the form $\alpha^{(r)}(t) = \II_{[0,
r]}(t) - \II_{(r, \infty)}(t)$. Moreover, Theorem B there extends
the above result to c\`adl\`ag local martingales with symmetric
jumps.

Dubins, \'Emery and Yor in \cite{DEY93} and Vostrikova and Yor in
\cite{VoY00} gave shorter proofs with additional equivalent
conditions in the case when $M$ is a continuous martingale. In
these references the equivalent condition of the independence of
the DDS BM and $\qv{}$ explicitly appears. Besides, in
\cite{DEY93}, the conjecture that a continuous martingale $M$ has
the same law as its L\'evy transform $\hat{M} = \int \mbox{sgn}(M)
\di M$ if and only if its DDS BM and $\qv{}$ are independent is
proved to be equivalent to the conjecture that the L\'evy
transform is ergodic. Below we give a new, long, but elementary
proof for any continuous local martingale $M$  that the DDS BM and
$\qv{}$ are independent if and only if $M$ is symmetrically
evolving.

\begin{thm}\label{th:indep}
(a) If the Wiener process $W(t)$ $(t \ge 0)$ and the non-decreasing,
vanishing at 0, continuous stochastic process $C(t)$ $(t \ge 0)$ are
independent, then $M(t) = W(C(t))$ is a symmetrically evolving
continuous local martingale vanishing at 0, with quadratic variation
$C$.

(b) Conversely, if $M$ is a symmetrically evolving continuous local
martingale, then its DDS Brownian motion $W$ and its quadratic variation
$\qv{}$ are independent processes.
\end{thm}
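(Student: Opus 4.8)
The plan is to verify the three claims about $M(t)=W(C(t))$ in turn, using that $C$ is non-decreasing, continuous, vanishing at $0$, and independent of $W$.

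\emph{$M$ is a continuous local martingale with $\qv{}=C$.}
First I would pass to the natural filtration $\mathcal{G}_t = \sigma(W(s): s\ge 0) \vee \sigma(C(u): 0\le u\le t)$ (or, to keep $W$ adapted, $\sigma(W(s\wedge C(t)):s\ge 0)\vee\sigma(C(u):u\le t)$), with respect to which $C(t)$ is a (bounded, hence integrable) stopping-time-like random variable: for each fixed $t$, $C(t)$ is $\mathcal{G}_t$-measurable and, by independence of $W$ and $C$, the process $s\mapsto W(s)$ remains a Brownian motion independent of the $\sigma$-algebra carrying the $C$-information. Then $M$ is just Brownian motion evaluated along the continuous non-decreasing clock $C$; continuity of paths is immediate from continuity of $W$ and of $C$. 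To see the local-martingale property with quadratic variation $C$, I would either (i) localize by the stopping times $\sigma_n=\inf\{t: C(t)\ge n\}$ and use the optional stopping theorem for $W$ conditionally on $C$, checking $\EE[M(t\wedge\sigma_n)\mid\mathcal G_{s}] = M(s\wedge\sigma_n)$ by conditioning first on the whole path of $C$ and using that $W(C(t\wedge\sigma_n)) - W(C(s\wedge\sigma_n))$ is, given $C$, a mean-zero increment of $W$ over a deterministic time interval; or (ii) check that $M(t)^2 - C(t)$ is a local martingale by the same conditioning argument, so that $C$ is indeed the quadratic variation of $M$. Vanishing at $0$ is clear since $C(0)=0$ and $W(0)=0$.

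\emph{$M$ is symmetrically evolving.}
This is the heart of part (a) and I expect it to be the main point requiring care. Fix $n$, reals $0\le s<t_1<\dots<t_n$, and Borel sets $U_1,\dots,U_n$. I would condition on $\mathcal{F}_s^0=\sigma(M(u):u\le s)$, but work instead through the finer conditioning on the entire path $(C(u):u\ge 0)$ together with $(W(v):v\le C(s))$; note $\mathcal F_s^0$ is generated by exactly this data (the pre-$s$ path of $M$ is a measurable function of $C|_{[0,s]}$ and $W|_{[0,C(s)]}$, and conversely one recovers $C|_{[0,s]}$ and $W|_{[0,C(s)]}$ from $M|_{[0,s]}$ on the event of interest). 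Given this data, the increments $M(t_i)-M(s)=W(C(t_i))-W(C(s))$ are increments of the \emph{post-}$C(s)$ Brownian motion $\widehat W(r):=W(C(s)+r)-W(C(s))$ over the deterministic times $r_i=C(t_i)-C(s)$, and $\widehat W$ is a Brownian motion independent of the conditioning $\sigma$-algebra. Since $-\widehat W$ is also a Brownian motion with the same law, the conditional probability of $\Gamma=\{W(C(t_i))-W(C(s))\in U_i,\ \forall i\}$ equals that of $\Gamma^-$ (each $U_i$ replaced by $-U_i$); taking a further conditional expectation down to $\mathcal F_s^0$ preserves the equality, giving \eqref{eq:indep1}. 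The one technical nuisance is that some $r_i$ may coincide (when $C$ has a flat stretch after $s$), but this only forces equalities among the $M(t_i)-M(s)$ and does not disturb the symmetry of the joint law under $\widehat W\mapsto-\widehat W$.

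\emph{Main obstacle.}
The routine parts are the path-continuity and the local-martingale/quadratic-variation bookkeeping (standard conditional optional stopping). The genuinely delicate step is justifying the \textbf{identification of $\mathcal F_s^0$ with the data $(C|_{[0,s]},\,W|_{[0,C(s)]})$} and arguing that conditioning on the (larger, more convenient) $\sigma$-algebra and then projecting down to $\mathcal F_s^0$ is legitimate — i.e. a careful use of the tower property together with the independence of $W$ and $C$ to guarantee that $\widehat W$ is conditionally a Brownian motion given that larger $\sigma$-algebra. Once that is set up cleanly, the sign-flip symmetry $\widehat W \overset{d}{=} -\widehat W$ closes part (a) immediately.
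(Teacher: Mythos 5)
Your proposal addresses only part (a) of the theorem. For that part your route is sound and is essentially the paper's own argument in different clothing: the paper quotes Revuz--Yor V (1.5) for the fact that $M=W(C(\cdot))$ is a continuous local martingale, and verifies the symmetric-evolution property (\ref{eq:indep2}) by integrating against the joint law of the relevant values of $C$ and using the symmetry and independence of Brownian increments --- which is exactly what your conditioning on the full path of $C$ together with $W|_{[0,C(s)]}$, followed by the sign flip $\widehat W \mapsto -\widehat W$ and the tower property down to $\mathcal{F}_s^0$, amounts to. The measurability point you flag (that $\mathcal{F}_s^0$ is contained in the larger conditioning $\sigma$-algebra, which is all one needs for the projection step) is a genuine but routine detail, and your treatment of possibly coinciding times $r_i$ is correct.

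The genuine gap is that part (b), the converse, is not addressed at all, and it is by far the harder half --- it is the direction for which the paper supplies its long elementary proof. There one must show that symmetric evolution of $M$ forces independence of the DDS Brownian motion $W$ and $\qv{}$; the paper does this by proving that, for each $m$, the stopping-time increments $\Delta\tau_m(k)$ and the embedded signs $M(\tau_m(j))-M(\tau_m(j-1))$ are independent, i.e. identity (\ref{eq:indepd}), which gives independence of $B_m$ and $N_m$ and then of $W$ and $\qv{}$ in the limit via Theorems \ref{th:Wienerm} and \ref{th:qvar}. Getting (\ref{eq:indepd}) from the hypothesis (\ref{eq:indep1}) requires several ideas with no counterpart in your plan: extending the symmetry condition from fixed times to stopping times $\sigma$ by an optional-stopping-type $L^2$ limit over discrete approximations of $\sigma$ (Step 1), writing events such as $\{\Delta\tau_m(k+1) > a\}$ as monotone limits of intersections of finitely many events involving $|M(q+\tau_m(k))-M(\tau_m(k))|$ at dyadic $q$ so that they are unaffected by reflections (Step 3), and an induction over the successive stopping times that interleaves these with the sign events (Steps 4--5) before undoing the conditioning (Step 6). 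The sign-flip argument that closes part (a) is unavailable here, because in (b) one is \emph{given} only the distributional symmetry of $M$ and must manufacture independence from it, not the other way around. As it stands, the proposal proves at most half of the theorem.
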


\begin{proof}
To prove (a) suppose that $W$ and $C$ are independent. By \cite[V (1.5),
p. 181]{Yor99}, $M(t)=W(C(t))$ is a continuous local martingale. For
simplicity, we will use only three sets in showing that $M$ is
symmetrically evolving, i.e. equation (\ref{eq:indep2}) holds, the
generalization being straightforward:
\begin{multline*}
\PP \lb M(s_1) \in V_1, M(t_1)-M(s) \in U_1,
M(t_2)-M(s) \in U_2 \rb \\
= \int \PP \lb W(x_1) \in V_1 \rb \\
\int_{U_1} \PP \lb W(y_2)
-W(y_1) \in U_2-u \rb \PP \lb W(y_1)-W(y_0) \in \di u \rb \\
\times \PP \lb C(s_1) \in \di x_1, C(s) \in \di y_0, C(t_1) \in \di
y_1, C(t_2) \in \di y_2 \rb \\
= \int \PP \lb W(x_1) \in V_1 \rb \\
\int_{U_1} \PP \lb W(y_1)
-W(y_2) \in U_2-u \rb \PP \lb W(y_0)-W(y_1) \in \di u \rb \\
\times \PP \lb C(s_1) \in \di x_1, C(s) \in \di y_0, C(t_1) \in
\di y_1, C(t_2) \in \di y_2 \rb \\
= \PP \lb M(s_1) \in V_1, M(s)-M(t_1) \in U_1,
M(s)-M(t_2) \in U_2 \rb ,
\end{multline*}
using the independence of $B$ and $C$ on one hand and the
symmetry and independence of the increments of Brownian motion on
the other hand.

For proving (b) we want to show that the sequences $\tau_m(k)$
$(k=1,2, \dots )$ and $M(\tau_m(j))-M(\tau_m(j-1))$ $(j=1,2, \dots
)$ are independent. Since $N_m(t)$ depends only on the number of
stopping times $\tau_m(k) \le t$, cf. (\ref{eq:N_m}), while the
shrunken random walk $B_m$ is determined by the steps
$2^{-m}X_m(j) = M(\tau_m(j))-M(\tau_m(j-1))$, cf. Lemma
\ref{le:SkorM}, this would imply their independence and so the
independence of $W$ and $\qv{}$ too by Theorems \ref{th:Wienerm}
and \ref{th:qvar}. For this it is enough to show that with
arbitrary integers $m \ge 0$, $n \ge 1$, $0 \le k < n$, reals
$t_1, \dots , t_n$ and $\delta_1=\pm 2^{-m}, \dots , \delta_n=\pm
2^{-m}$ (we fix these parameters for the remaining part of the
proof) one has
\begin{equation}
\PP \lb A \cap B_{\le k} \cap B_{>k} \rb
= \PP \lb A \cap B_{\le k} \cap B_{>k}^- \rb,
\label{eq:indepd}
\end{equation}
where $A_{\le k} = \bigcap_{r=1}^k \lb \tau_m(r) \le t_r \rb$,
$A_{> k}$ is similar, but with $r = k+1, \dots n$, $A = A_{\le k}
\cap A_{>k}$, $B_{\le k} = \bigcap_{r=1}^k \lb  M(\tau_m(r)) -
M(\tau_m(r-1)) = \delta_r \rb$, $B_{>k}$ is similar, but with $r =
k+1, \dots n$, $B = B_{\le k} \cap B_{>k}$ , and finally
$B_{>k}^-$ is the same as $B_{>k}$, but each $\delta_j$ is
replaced by $-\delta_j$. For, if one can reflect all $\delta_j$s
for $k < j \le n$ without changing the probability, then one has
the same probability with arbitrary changed signs of $\delta_j$s
too, since any such change can be reduced to a finite sequence of
reflections of the above type. Let $B^*$ be similar to $B$, but
with arbitrarily changed signs of $\delta_j$s. Then, as we said,
(\ref{eq:indepd}) implies that $\PP \lb A \cap B \rb =\PP \lb A
\cap B^* \rb$. Since $\PP \lb B \rb = \PP \lb B^* \rb$ by Lemma
\ref{le:SkorM}, the desired independence follows.

We will prove (\ref{eq:indepd}) in several steps.

\emph{Step 1.}
In condition (\ref{eq:indep1}) one can replace $s$ by an
arbitrary stopping time $\sigma$ adapted to the filtration
$(\mathcal{F}_s^0)$: for any $u_j \ge 0$ $(1 \le j \le N)$,
\begin{equation}
\PP \lb F \mid \mathcal{F}_{\sigma}^0 \rb = \PP \lb F^- \mid
\mathcal{F}_{\sigma}^0 \rb , \label{eq:sigma}
\end{equation}
where
\begin{equation}
F = \bigcap_{j=1}^N \lb M(u_j+\sigma) - M(\sigma) \in U_j \rb ,
\label{eq:Fsigma}
\end{equation}
and $F^-$ is the same, but each $U_j$ replaced by $-U_j$. This is
somewhat similar to the optional stopping theorem, see \cite[II
(3.2), p.69]{Yor99}. Indeed, for discrete valued stopping times
$\sigma$ the statement is obvious, since then
\[
\PP \lb F \mid \mathcal{F}_{\sigma}^0 \rb
= \sum_{s_r} \II\lb \sigma = s_r \rb
\PP \lb F \mid \mathcal{F}_{s_r}^0 \rb ,
\]
where $\lb s_r \rb$ denotes the range of $\sigma$, including possibly
$\infty$, and $\II\lb S \rb$ denotes the indicator of the set $S$.
For every stopping time $\sigma$ there
exists a decreasing sequence of discrete valued stopping times $\sigma_i$
almost surely converging to $\sigma$. Let us denote the events defined according
to (\ref{eq:Fsigma}) for $\sigma_i$ by $F_i$ and $F_i^-$, respectively.
Further, denote the operators projecting
$L^2(\Omega)$ onto its subspace of random variables measurable
w.r.t. $\mathcal{F}_{\sigma_i}^0$ and $\mathcal{F}_{\sigma}^0$ by $P_i$
and $P$, respectively. Then
\begin{eqnarray*}
&& \| \PP \lb F_i \mid \mathcal{F}_{\sigma_i}^0 \rb - \PP \lb F \mid
\mathcal{F}_{\sigma}^0 \rb \|_2
= \| P_i \II\lb F_i \rb - P \II \lb F \rb \|_2 \\
&\le& \| P_i \left( \II\lb F_i \rb - \II \lb F \rb \right) \|_2
+ \| P_i \II \lb F \rb - P \II \lb F \rb \|_2 \\
&\le& \| \II\lb F_i \rb - \II \lb F \rb \|_2
+ \| \EE(\II \lb F \rb \mid \mathcal{F}_{\sigma_i}^0 ) - \EE(\II \lb
F \rb \mid \mathcal{F}_{\sigma}^0 ) \|_2 ,
\end{eqnarray*}
which goes to 0 as $i \to \infty$. Here we used that $\EE(\II \lb F
\rb \mid \mathcal{F}_{\sigma_i}^0 )$ is a bounded, reversed-time
martingale converging to $\EE(\II \lb F \rb \mid \mathcal{F}_
{\sigma}^0 )$. Hence for any $\epsilon > 0$,
\begin{eqnarray*}
\| \PP \lb F \mid \mathcal{F}_{\sigma}^0 \rb - \PP \lb F^- \mid
\mathcal{F}_{\sigma}^0 \rb \|_2  &\le& \| \PP \lb F \mid
\mathcal{F}_{\sigma}^0 \rb
- \PP \lb F_i \mid \mathcal{F}_{\sigma_i}^0 \rb \|_2 \\
&& + \| \PP \lb F^- \mid
\mathcal{F}_{\sigma}^0 \rb - \PP \lb F_i^- \mid \mathcal{F}_{\sigma_i}^0
\rb \|_2 < \epsilon ,
\end{eqnarray*}
if $i$ is large enough. This shows that the left hand side of the
inequality is zero, so (\ref{eq:sigma}) holds.

\emph{Step 2.}
Then for arbitrary reals $0 \le u_j < v_j$ and Borel sets $U_j$ $(1 \le j
\le N)$ we have
\[
\PP \lb G \mid \mathcal{F}_\sigma^0 \rb
=  \PP \lb G^- \mid \mathcal{F}_\sigma^0 \rb ,
\]
where
\begin{equation}
G = \bigcap_{j=1}^N \lb M(v_j+\sigma) - M(u_j+\sigma) \in U_j \rb ,
\label{eq:Guv}
\end{equation}
and $G^-$ is the same, but each $U_j$ is replaced by $-U_j$. For
simplicity we prove this only for two factors, the general case
being similar:
\begin{multline*}
\PP \lb M(v_1+\sigma) - M(u_1+\sigma) \in U_1,
M(v_2+\sigma) - M(u_2+\sigma) \in U_2 \mid \mathcal{F}_\sigma^0 \rb \\
= \int \II \{x_1-x_2 \in U_1\} \: \II \{x_3-x_4 \in U_2\} \\
\times \PP \lb M(v_1+\sigma)-M(\sigma) \in \di x_1, M(u_1+\sigma)-M(\sigma) \in \di x_2, \right. \\
\left. M(v_2+\sigma)-M(\sigma) \in \di x_3,
M(u_2+\sigma)-M(\sigma) \in \di x_4 \mid \mathcal{F}_\sigma^0 \rb \\
= \int \II \{x_1-x_2 \in U_1\} \: \II \{x_3-x_4 \in U_2\}  \\
\times \PP \lb M(\sigma)-M(v_1+\sigma) \in \di x_1, M(\sigma)-M(u_1+\sigma) \in \di x_2, \right. \\
\left. M(\sigma)-M(v_2+\sigma) \in \di x_3,
M(\sigma)-M(u_2+\sigma) \in \di x_4 \mid \mathcal{F}_\sigma^0 \rb \\
= \PP \lb M(u_1+\sigma) - M(v_1+\sigma) \in U_1,
M(u_2+\sigma) - M(v_2+\sigma) \in U_2 \mid \mathcal{F}_\sigma^0 \rb .
\end{multline*}

\emph{Step 3.}
Let $\Delta \tau_m(i) = \tau_m(i) - \tau_m(i-1)$ and $a \in [0, \infty)$.
Consider the event
\begin{eqnarray}
S(a) &=& \lb \Delta \tau_m(k+1) \ge a \rb \nonumber \\
&=& \lb \inf \lb u>0 : |M(u+\tau_m(k))-M(\tau_m(k))| \ge 2^{-m} \rb > a \rb
\nonumber \\
&=& \lb \sup_{0 < u \le a} \lb |M(u+\tau_m(k))-M(\tau_m(k))| \rb < 2^{-m}
\rb \nonumber \\
&=& \bigcap_{0<u \le a} \lb |M(u+\tau_m(k))-M(\tau_m(k))| < 2^{-m} \rb .
\label{eq:Sa}
\end{eqnarray}
Introduce the set of dyadic numbers $D_l = \lb r2^{-l}: r \in \ZZ \rb$
$(l \ge 0)$, and the events
\begin{equation}
S_{j,l}(a) = \bigcap_{q \in D_l, 0 < q \le a} \lb |M(q+\tau_m(k))
- M(\tau_m(k))| \le 2^{-m} - 2^{-j} \rb  .
\label{eq:Sjla}
\end{equation}
when $j \ge m$. For $j$ fixed,
$\left(S_{j,l}\right)_{l=0}^{\infty}$ is a decreasing sequence of
events. Take
\[
S_{j}(a) = \bigcap_{l=0}^{\infty} S_{j,l}(a) .
\]
Since $S_{j,l}(a)$ is increasing with growing $j$, so is
$S_{j}(a)$. Put
\[
S^*(a) = \bigcup_{j=m}^{\infty} S_{j}(a)
= \bigcup_{j=m}^{\infty} \bigcap_{l=0}^{\infty} S_{j,l}(a).
\]
We want to show that $S^*(a) = S(a)$, where the latter is
defined by (\ref{eq:Sa}).

First fix an $\omega \in S(a)$. (We suppress $\omega$ in the
notations below.) Then with this $\omega$,
\[
\sup_{0 < u \le a} \lb |M(u+\tau_m(k))-M(\tau_m(k))| < 2^{-m} \rb
=: s < 2^{-m} .
\]
If $j>m$ is such that $2^{-j} < 2^{-m} - s$, then $\omega \in
S_{j,l}(a)$ for any $l \ge 0$. So $\omega \in S_{j}(a)$ for each
$j$ large enough, consequently, $\omega \in S^*(a)$.

Second, fix an $\omega \notin S(a)$. Then there exists a real $u_0
\le a$ (depending on $\omega$) so that $|M(u_0+\tau_m(k))-M(\tau_m(k))|
= 2^{-m}$. Since the path of $M$ is a continuous function, for any $j
\ge m$ there exists an $l \ge 0$ and $q \in D_l, 0 < q \le a$ such that
$|M(q  +\tau_m(k))-M(\tau_m(k))| > 2^{-m} - 2^{-j}$. That is, $\omega
\notin S_{j}(a)$ if $j \ge m$, thus $\omega \notin S^*(a)$.

In other words, we have proved above that
\begin{eqnarray*}
&& \lb \Delta \tau_m(k+1) > a \rb = S(a) = \lim_{j
\to \infty} \lim_{l \to \infty}  S_{j,l}(a) \\
&=& \lim_{j \to \infty} \lim_{l \to \infty} \bigcap_{q \in D_l, 0 < q \le a}
\lb |M(q  +\tau_m(k))-M(\tau_m(k))| \le 2^{-m} - 2^{-j} \rb .
\end{eqnarray*}
Consequently, any event $\lb \Delta \tau_m(k+1) > a \rb = S(a)$ can
be written in terms of monotonic sequences of intersections of
finitely many events of the form
\[
\lb |M(q +\tau_m(k)) - M(\tau_m(k))| \le c \rb \qquad (c \ge 0) .
\]
Moreover, such an approximation can be applied to $\lb \Delta \tau_m(k+1)
\in (a, b] \rb = S(a) \setminus S(b)$ as well, with any
$0 \le a < b$.

\emph{Step 4.}
First, Steps 2 and 3 imply that for any $a \ge 0$,
\[
\PP \lb G \cap S_{j,l}(a) \mid \mathcal{F}_{\tau_m(k)}^0 \rb
= \PP \lb G^- \cap S_{j,l}(a) \mid \mathcal{F}_{\tau_m(k)}^0 \rb ,
\]
because of the absolute value in definition (\ref{eq:Sjla}) of
$S_{j,l}(a)$. Throughout Step 4 $G$ and $G^-$ are defined according to
(\ref{eq:Guv}) with $\sigma = \tau_m(k)$, but otherwise with arbitrary
parameters, possibly different from case to case.
Then taking limit as $j \to \infty$ and $l \to \infty$ it follows
from Steps 2 and 3 that
\begin{eqnarray*}
&& \PP \lb G \cap \lb \Delta \tau_m(k+1) > a_{k+1} \rb \mid
\mathcal{F}_{\tau_m(k)}^0 \rb
\\
&=& \PP \lb G^- \cap \lb \Delta \tau_m(k+1) > a_{k+1} \rb \mid
\mathcal{F}_{\tau_m(k)}^0 \rb .
\end{eqnarray*}

We want to extend this symmetry property by induction over $i=k+1,
\dots , n+1$. Taking arbitrary reals $a_i \ge 0$ and integers $l
\ge 0$, $r_i > 0$, $(k+1 \le i \le n+1)$ define the events
\begin{eqnarray*}
&& B_i = \bigcap_{p=k+1}^i \lb M(\tau_m(p))-M(\tau_m(p-1)) = \delta_p
\rb , \\
&& H_i = \bigcap_{p=k+1}^i \lb \Delta \tau_m(p) > a_p \rb , \\
&& K_{i,l}(r) = \bigcap_{p=k+1}^i \lb \Delta
\tau_m(p) \in \left( (r_{p} - 1) 2^{-l}, r_p 2^{-l} \right] \rb \\
&& L_{i,l}(r) = \lb \delta_i
\left( M(r_i2^{-l} + \dots + r_{k+1}2^{-l} + \tau_m(k)) \right. \right. \\
&& \left. \left. \qquad \qquad - M(r_{i-1}2^{-l} + \dots + r_{k+1}2^{-l}
+ \tau_m(k)) \right) > 0 \rb ,
\end{eqnarray*}
and $B_i^-$, $L_{i,l}^-(r)$ similarly, but multiplying each $\delta_p$
by $(-1)$. Suppose that we have already proved that
\[
\PP \lb G \cap B_{i-1} \cap H_i \mid \mathcal{F}_{\tau_m(k)}^0 \rb
= \PP \lb G^- \cap B_{i-1}^- \cap H_i \mid \mathcal{F}_{\tau_m(k)}^0
\rb ,
\]
where $B_k = \Omega$. Define the following event as a generalization
of (\ref{eq:Sjla}):
\begin{eqnarray*}
S_{i,j,l}(a, r) &=& \bigcap_{q \in D_l, 0 < q
\le a} \lb \left| M( q + r_i 2^{-l} + \dots + r_{k+1} 2^{-l}
+ \tau_m(k)) \right. \right. \\
&& \qquad \qquad \left. \left. - M(r_{i-1}2^{-l}
+ r_{k+1}2^{-l} + \tau_m(k)) \right| \le 2^{-m} - 2^{-j} \rb ,
\end{eqnarray*}
where $j \ge m$. Then by the induction hypothesis we get that
\begin{multline*}
\PP \lb G \cap B_{i-1} \cap H_i \cap \bigcup_{r_{k+1}, \dots ,
r_i = 1}^{2^{2l}+1} K_{i,l}(r) \cap
L_{i,l}(r) \cap S_{i,j,l}(a_{i+1}, r)
\mid \mathcal{F}_{\tau_m(k)}^0 \rb \\
= \PP \lb G^- \cap B_{i-1}^- \cap H_i \cap \bigcup_{r_{k+1}, \dots ,
r_i = 1}^{2^{2l}+1} K_{i,l}(r) \cap
L_{i,l}^-(r) \cap S_{i,j,l}(a_{i+1}, r) \mid \mathcal{F}_{\tau_m(k)}^0
\rb ,
\end{multline*}
where we agree that when $r_{p} = 2^{2l}+1$, the interval $\left(
(r_{p} - 1) 2^{-l}, r_p 2^{-l} \right]$ in the definition of
$K_{i,l}(r)$ is replaced by $\left( (r_{p} - 1) 2^{-l}, \infty \right]
= ( 2^l, \infty ]$.
Notice here that the events in $K_{i,l}$ can be written in terms of a
difference of events appearing in $H_i$, while the events
in $L_{i,l}$ and $S_{i,j,l}$ are both of the type appearing in $G$,
though $S_{i,j,l}$ is not affected by reflections because of the
absolute values in its definition. Then taking limit as $j \to
\infty$ and $l \to \infty$ it follows that
\[
\PP \lb G \cap B_i \cap H_{i+1} \mid \mathcal{F}_{\tau_m(k)}^0 \rb
= \PP \lb G^- \cap B_i^- \cap H_{i+1} \mid \mathcal{F}_{\tau_m(k)}^0
\rb .
\]
This completes the induction.

Comparing the notations introduced in this step with the ones introduced above,
observe that $B_{>k} = B_n$ and $H_{>k} = H_n = H_{n+1}$, if
$a_{n+1}=0$. Thus one obtains that
\[
\PP \lb B_{>k} \cap H_{>k} \mid \mathcal{F}_{\tau_m(k)}^0 \rb
= \PP \lb B_{>k}^- \cap H_{>k} \mid \mathcal{F}_{\tau_m(k)}^0
\rb .
\]

\emph{Step 5.}
The result of Step 4 implies that
\begin{eqnarray*}
&& \PP \lb A_{> k} \cap B_{> k} \mid \mathcal{F}_{\tau_m(k)}^0 \rb \\
&=& \int \II \lb \tau_m(k)+x_{k+1} \le t_{k+1}, \dots ,
\tau_m(k)+x_{k+1}+\cdots +x_n \le t_n \rb \\
&& \times \PP \lb B_{> k} \cap \lb \Delta
\tau_m(k+1) \in \di x_{k+1}, \dots , \Delta \tau_m(n) \in \di x_n \rb
\mid \mathcal{F}_{\tau_m(k)}^0 \rb \\
&=& \int \II \lb \tau_m(k)+x_{k+1} \le t_{k+1}, \dots ,
\tau_m(k)+x_{k+1}+\cdots +x_n \le t_n \rb \\
&& \times \PP \lb B_{> k}^- \cap \lb \Delta
\tau_m(k+1) \in \di x_{k+1}, \dots , \Delta \tau_m(n) \in \di x_n \rb
\mid \mathcal{F}_{\tau_m(k)}^0 \rb \\
&=& \PP \lb A_{> k} \cap B_{> k}^- \mid \mathcal{F}_{\tau_m(k)}^0
\rb .
\end{eqnarray*}

\emph{Step 6.}
Finally, it follows from Step 5 that
\begin{eqnarray*}
\PP \lb A \cap B \rb &=& \PP \lb A_{\le k} \cap A_{>k} \cap
B_{\le k} \cap B_{>k} \rb \\
&=& \EE \left( \EE \left( \II \lb A_{\le k} \cap  B_{\le k} \rb \II
\lb A_{>k} \cap  B_{>k} \rb \mid \mathcal{F}_{\tau_m(k)}^0
\right) \right) \\
&=& \EE \left(  \II \lb A_{\le k} \cap B_{\le k} \rb \PP \lb
A_{>k} \cap B_{>k} \mid \mathcal{F}_{\tau_m(k)}^0 \right) \rb \\
&=& \EE \left(  \II \lb A_{\le k} \cap B_{\le k} \rb \PP \lb
A_{>k} \cap B_{>k}^- \mid \mathcal{F}_{\tau_m(k)}^0 \right) \rb \\
&=& \PP \lb A_{\le k} \cap A_{>k} \cap
B_{\le k} \cap B_{>k}^- \rb .
\end{eqnarray*}
This proves (\ref{eq:indepd}), and so completes the proof of the theorem.
\end{proof}




\begin{thebibliography}{99}

\bibitem{Dam65} Dambis, K.E. (1965) On the decomposition of continuous
martingales. \emph{Theor. Prob. Appl.} {\bf 10}, 401-410.

\bibitem{DEY93} Dubins, L. E.; \'Emery, Michel and Yor, M. (1993)
On the L\'evy transformation of Brownian motions and continuous martingales.
In: \emph{S\'eminaire de Probabilit\'es, XXVII,
Lecture Notes in Math., 1557}, pp. 122--132, Springer, Berlin.

\bibitem{DuS65} Dubins, L. and Schwarz, G. (1965) On continuous
martingales. \emph{Proc. Nat. Acad. Sci. USA} {\bf 53}, 913-916.

\bibitem{Hoef63} Hoeffding, W. (1963) Probability inequalities for sums
of bounded random variables. \emph{J. Amer. Statist. Assoc.} {\bf 58}, 13-30.

\bibitem{Kar83} Karandikar, R.L. (1983) On the quadratic variation
process of continuous martingales. \emph{Illinois J. Math.} {\bf 27},
178-181.

\bibitem{Kie69} Kiefer, J. (1969) On the deviation in the
Skorokhod-Strassen approximation scheme. \emph{Z.
Wahrscheinlichkeitstheorie und Verw. Gebiete} {\bf 13} 321-332.

\bibitem{Kni62} Knight, F.B. (1962) On the random walk and Brownian
motion. \emph{Trans. Amer. Math. Soc.} {\bf 103}, 218-228.

\bibitem{Oco93} Ocone, D.L. (1993) A symmetry characterization of
conditionally independent increment martingales. In: D. Nualart
and M. Sanz-Sol\'e, eds. \emph{Barcelona Seminar on Stochastic
Analysis, Progress in Probability, 32}, pp. 147-167, Birkh\"auser,
Basel.

\bibitem{Rev90} R\'ev\'esz, P. (1990) \emph{Random Walk in Random and
Non-Random Environments.} World Scientific, Singapore.

\bibitem{Yor99} Revuz, D. and Yor, M. (1999) \emph{Continuous Martingales
and Brownian Motion.} Third edition, Springer, Berlin.

\bibitem{Sza96} Szabados, T. (1996) An elementary introduction to the
Wiener process and stochastic integrals. \emph{Studia Sci.
Math. Hung.} {\bf 31}, 249-297.

\bibitem{Sza01} Szabados, T. (2001) Strong approximation of fractional
Brownian motion by moving averages of simple random walks.
\emph{Stochastic Process. Appl.} {\bf 92}, 31-60.

\bibitem{VoY00} Vostrikova, L. and Yor, M. (2000) Some invariance properties
(of the laws) of Ocone's martingales. In: \emph{S\'eminaire de
Probabilit\'es, XXXIV, Lecture Notes in Math., 1729}, pp.
417--431,  Springer, Berlin.

\end{thebibliography}
\end{document}